\newtheorem{theorem}{Theorem}[section]
\newtheorem{lemma}[theorem]{Lemma}
\newtheorem{remark}[theorem]{Remark}
\numberwithin{equation}{section}
\numberwithin{equation}{section}
\begin{document}
\title{On the growth of the number of totally geodesic surfaces in some hyperbolic $3$-manifolds}
\author{Junehyuk Jung}
\address{Department of Mathematics, Texas A\&M University, College Station, TX 77840}
\email{junehyuk@math.tamu.edu}
\begin{abstract}
Let $d$ be a positive square-free integer $\equiv 3 \pmod{4}$ such that there is no invariant of the ideal class group $\mathbb{Q}\lbrack \sqrt{-d}\rbrack$ which is divisible by $4$. We prove an asymptotic formula for the number of immersed totally geodesic surfaces in $\Gamma_{-d}\backslash \mathbb{H}^3$ having area less than $X$.
\end{abstract}
\thanks{We appreciate Sang-hyun Kim for suggesting the problem and many helpful discussions. We thank Peter Sarnak, Junsoo Ha, Sug Woo Shin, Hee Oh, Jeff Hoffstein, E. Mehmet Kiral, and Naser Talebizadeh Sardari for various comments related to the main result.}
\maketitle
\section{Introduction}
Let $M$ be an $n$-dimensional hyperbolic manifold. Let $\pi\left(X\right)$ be the number of closed geodesics in $M$ that has length less than $X$. A quick application of Selberg's trace formula is
\begin{equation}\label{pgt}
\pi\left(X\right) \sim\frac{e^{(n-1)X}}{(n-1)X},
\end{equation}
which is often referred as a prime geodesic theorem \cite{MR0088511, MR0126549, MR0257933, MR2630950, MR711197}. Here we use $f(X) \sim g(X)$ to mean $\lim_{X\to\infty} f(X)/g(X) = 1$.

Now let $\xi(X)$ be the number of (immersed) totally geodesic surfaces in $M$ that has area less than $X$. The main purpose of this article is to find an asymptotic formula for $\xi(X)$ that is analogous to \eqref{pgt}. We note that $\xi(X)$ can be identically zero for certain hyperbolic $3$ manifolds $M$ (see for instance, \cite{MR1937957}). Because of its subtle nature, we examine this problem for a certain class of hyperbolic $3$-manifolds.

\begin{theorem}\label{thm}
Let $d$ be a positive square-free integer $\equiv 3 \pmod{4}$. Assume that there is no invariant of the ideal class group $\mathbb{Q}\lbrack \sqrt{-d}\rbrack$ which is divisible by $4$. Let $\Gamma_{d}$ be the Bianchi group $ PSL_2(O_{d})$, where $O_{d}$ is the ring of integers of $\mathbb{Q}[\sqrt{-d}]$. Then the number of immersed totally geodesic surfaces in $\Gamma_{d}\backslash \mathbb{H}^3$ having area less than $X$ is given by
\begin{equation}\label{blah}
\xi(X) = \frac{\tau (d)\pi}{4} \prod_{p} \left(1-\frac{\chi_{-d}(p)^2+\chi_{-d}(p)}{p^2}+\frac{1}{p^3}\right) X +o(X).
\end{equation}
Here $\tau(n)$ is the number of divisors of $n$.
\end{theorem}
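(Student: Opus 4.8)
**The plan is to reduce the counting problem to a problem in the arithmetic of binary quadratic forms / quaternion algebras, and then apply known asymptotics for class numbers.**

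The first step is to recall the classification of totally geodesic surfaces in Bianchi orbifolds. An immersed totally geodesic surface in $\Gamma_d\backslash\mathbb{H}^3$ corresponds to a $\Gamma_d$-orbit of totally geodesic hyperplanes $\mathbb{H}^2 \subset \mathbb{H}^3$ whose stabilizer in $\Gamma_d$ is a lattice (a Fuchsian subgroup of finite covolume). By work going back to Maclachlan--Reid and Hatcher, such hyperplanes are exactly those fixed by an embedding of a quaternion (in fact, after the real place, a reflection) and are parametrized by integral binary quadratic forms of discriminant $-d$ (or by ideals / the genus theory of the order $O_d$), up to the action of $\Gamma_d$ together with $\mathrm{GL}$-type symmetries. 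Concretely, the geodesic planes carrying a totally geodesic surface are the sets $\{|z|^2\cdot a + \mathrm{Re}(z)\cdot b + c = 0\}$ with $a,b,c\in\mathbb{Z}$, $a>0$, and $b^2-4ac = -d m^2$ for some positive integer $m$, with the surface being the image of the stabilizer. I would first make this dictionary precise, identifying the $\Gamma_d$-equivalence classes of such planes with ideal classes (or $\mathrm{SL}_2(\mathbb{Z})$-classes of binary quadratic forms of discriminant $-dm^2$), and computing the area of the resulting surface in terms of $m$ and the class number $h(-dm^2)$; this is where the hypothesis that $4$ does not divide any invariant of the class group of $\mathbb{Q}[\sqrt{-d}]$ enters, since it guarantees that each genus is a single class, so that distinct forms in the same genus give the same immersed surface and the combinatorics of "which classes glue together" is controlled.

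The second step is the counting itself. Having expressed $\xi(X)$ as a sum over $m$ of (something like) the number of classes of primitive binary quadratic forms of discriminant $-dm^2$ with area $\le X$, and noting area grows linearly in $m$ (with an explicit constant involving the covolume and $\pi$), the problem becomes estimating $\sum_{m\le cX} h(-dm^2)$ weighted appropriately, equivalently a sum of class numbers over a "thin" family of discriminants. Using the conductor–discriminant formula $h(-dm^2) = h(-d)\, m \prod_{p\mid m}(1 - \chi_{-d}(p)/p) / [O_d^\times : O_{dm^2}^\times]$ (for $d>3$ the unit index is $1$), this reduces to estimating $\sum_{m\le Y} m\prod_{p\mid m}(1-\chi_{-d}(p)/p)$, a multiplicative function whose Dirichlet series factors as $\zeta(s-1)\prod_p(\cdots)$; a standard Tauberian/contour argument then yields a main term of size $Y^2$ with exactly the Euler product appearing in \eqref{blah}. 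I would set up the Dirichlet series $\sum_m a_m m^{-s}$ where $a_m = m\prod_{p\mid m}(1-\chi_{-d}(p)/p)$, check it equals $\zeta(s-1)$ times an Euler product convergent for $\mathrm{Re}(s) > 3/2$, read off the residue at $s=2$, and combine with the linear area-vs-$m$ relation and the factor $h(-d)$, together with the genus-theory count (the factor $\tau(d)$ comes from the number of genera, which under the hypothesis equals the class number's $2$-rank contribution, i.e. $2^{\omega(d)-1}$-type count matching $\tau(d)$ after accounting for $d\equiv 3\pmod 4$ square-free).

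The main obstacle I expect is twofold: first, getting the \emph{exact constant} right — one must carefully track overcounting, i.e. when two triples $(a,b,c)$ give $\Gamma_d$-equivalent planes, when a plane's stabilizer is a full Fuchsian lattice versus when the "surface" degenerates, and the precise relationship between the hyperbolic area of $\mathrm{Stab}_{\Gamma_d}(\mathbb{H}^2)\backslash\mathbb{H}^2$ and the class number (including factors of $\pi$, the factor $\frac14$, and orbifold/orientation issues from working in $PSL_2$ versus $SL_2$ and the possible presence of the orientation-reversing involution). Second, the error term: since we are summing class numbers over the sparse set of discriminants $-dm^2$ and want $o(X)$, the naive bound $h(-dm^2) \ll \sqrt{dm^2}\log(dm) \asymp m\log m$ already gives the main term with error $O(X\log X / \sqrt{X})$-type savings after partial summation, but one must be careful that the multiplicative-function sum's error term genuinely beats $X$; this is routine once the Dirichlet series is in hand, but the bookkeeping is delicate. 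The prime geodesic theorem \eqref{pgt} itself is not needed for the proof of Theorem~\ref{thm} — it is only the motivating analogue — so the real content is entirely in the reduction to quadratic forms and the elementary analytic number theory of the resulting class-number sum.
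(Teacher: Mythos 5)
Your high-level strategy --- classify the surfaces via circles/Hermitian forms, compute areas through quaternion orders, and finish with a Tauberian argument --- matches the paper's in outline, but the central combinatorial claim on which your count rests is not the right structure, and it cannot produce \eqref{blah}. You assert that the surfaces come in packets indexed by $m$, with $h(-dm^2)\asymp m^{1+o(1)}$ surfaces in the $m$-th packet and area growing linearly in $m$; but then $\xi(X)\asymp\sum_{m\le cX}h(-dm^2)\asymp X^2$, contradicting the linear asymptotic you are trying to prove (you even note the main term is of size $Y^2$ with $Y\asymp X$). If instead you meant that the area is linear in the discriminant $dm^2$ (so $Y\asymp\sqrt{X}$), the order of magnitude is repaired but the constant is not: the class-number formula carries the factor $m\prod_{p\mid m}\bigl(1-\chi_{-d}(p)p^{-1}\bigr)$, which would flip the sign of the $\chi_{-d}(p)/p^2$ term in the final Euler product. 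The quantity that actually governs the count is not a class number at all but the \emph{area function} coming from Eichler's volume formula for the order: the surface attached to a circle of discriminant $D$ has area proportional to $F(D)=D\prod_{p\mid D}\bigl(1+\chi_{-d}(p)p^{-1}\bigr)$ (note the $+$ sign), and the Euler product in \eqref{blah} is the density constant for the values of $F$, obtained by Wiener--Ikehara applied to $\sum_n F(n)^{-s}$ at its pole $s=1$ --- not to a class-number Dirichlet series at $s=2$.

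The missing input is the correct parametrization. Under the hypothesis on the $4$-rank of the class group, Vulakh's classification shows that every relevant circle is equivalent, under the extended Bianchi group, to exactly one $\mathcal{C}_{m,c}:d|z|^2+2\mathrm{Re}(m\sqrt{-d}\,\bar z)+dc=0$ with $0\le m<d$ and $c\in\mathbb{Z}$, twisted by one of $\tau(d)/2$ coset representatives $\sigma_r$, $r\mid d$. Thus there are only $O_d(1)$ surfaces for each value of the discriminant $D=(m^2d-cd^2)/(m,d)^2$, and $D$ runs linearly in the free parameter $c$; the sum over ideal classes that you envision never appears. (Your heuristic that the hypothesis makes ``each genus a single class'' is the right intuition for \emph{why} the classification is this clean, but it enters through Vulakh's theorem, not through a bijection with ideal classes.) Beyond this, the remaining work --- which your proposal treats as bookkeeping --- is the explicit realization of each stabilizer as the norm-one units of a concrete $\mathbb{Z}$-order $M$ in $\left(\frac{-d,D}{\mathbb{Q}}\right)$, and the computation of the Eichler symbols $\left(\frac{M}{p}\right)$ and the local norm indices $[\mathbb{Z}_p^\times:\mathrm{nrd}(M_p^\times)]$ at the primes dividing $dD/d_0^2$; these local factors are exactly what produce the $2^{-\omega}$, the $\tau(d)$, and the split of the Euler product between $p\mid d$ and $p\nmid d$ in the final constant, and no version of the argument gets the constant in \eqref{blah} without them.
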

\begin{remark}
Similar formula can be derived for the case $d=4$,
\[
\xi(X) = \frac{5\pi}{12} \prod_{p} \left(1-\frac{\chi_{-4}(p)^2+\chi_{-4}(p)}{p^2}+\frac{1}{p^3}\right) X +o(X)
\]
where volumes of all totally geodesic surfaces are given in \cite{MR1108918}. The difference in the leading coefficient amounts to the fact that, for $d =4$, we have $\begin{pmatrix}
                                      i & 0 \\
                                      0 & -i
                                    \end{pmatrix} \in \Gamma_d$.
\end{remark}
\begin{remark}
It is likely that \eqref{blah} holds for all positive square-free integer $d\equiv 3 \pmod{4}$.
\end{remark}
\begin{remark}
Existence of asymptotic formula $\xi(X) \sim cX$ for some constant $c>0$ in the case of Bianchi group $\Gamma_{d}$ is first mentioned in \cite{sarletter}.
\end{remark}

The proof of Theorem \ref{thm} is fairly straightforward. We first identify each totally geodesic surface with a maximal Fuchsian subgroup in $\Gamma_{d}$. We then express the Fuchsian subgroup as a $\mathbb{Z}$-order of a quaternion algebra over $\mathbb{Q}$, as in \S6.4 of \cite{MR1108918}. Applying Main Theorem 39.1.8 with Remark 39.1.13 from \cite{Voight2017}, we obtain the area of the given totally geodesic surface.

For $d$ such that there is no invariant of the ideal class group $\mathbb{Q}\lbrack \sqrt{-d}\rbrack$ which is divisible by $4$, the complete parametrization of totally geodesic surfaces is given in \cite{Vulakh1993}. After computing area of all totally geodesic surfaces, we apply the Wiener--Ikehara theorem to deduce the asymptotic formula for $\xi(X)$.

It is desirable to find an analytic-geometric proof of Theorem \ref{thm}. This will require understanding of the leading coefficient in the asymptotic expansion.

\section{Preliminaries}
For each totally geodesic surface $S$, there exists (not necessarily unique) a circle or a straight line $\mathcal{C} \subset \partial \mathbb{H}^3 = \mathbb{C}^*$ such that
\begin{align*}
F_S :&= \mathrm{Stab}\left(S, \mathrm{PSL}_2(O_d)\right)\\
&= \mathrm{Stab}\left(\mathcal{C}, \mathrm{PSL}_2(O_d)\right).
\end{align*}
Such $\mathcal{C}$ satisfies an equation (see for instance, \cite{MR1937957})
\[
a|z|^2 +2\mathrm{Re}(Bz)+ c =0,
\]
where $a,c \in \mathbb{Z}$ and $B\in O_d$.

From a quick computation, one cam verify the following:
\begin{lemma}
For $\mathcal{C}$ given by $|z|^2=D$, we have
\[
\mathrm{Stab}\left(\mathcal{C}, \mathrm{PSL}_2(\mathbb{C})\right) = \left\{\begin{pmatrix} x & Dy\\ \bar{y} & \bar{x}\end{pmatrix}\right\}.
\]
\end{lemma}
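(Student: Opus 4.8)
The plan is to encode the condition of stabilising $\mathcal{C}=\{z\in\mathbb{C}^*:|z|^2=D\}$ (the circle of radius $\sqrt{D}$ about the origin, $D>0$) as a single algebraic identity, using the reflection in $\mathcal{C}$. One inclusion is immediate: substituting $z=\sqrt{D}\,e^{i\theta}$ one checks directly that each map $z\mapsto(xz+Dy)/(\bar{y}z+\bar{x})$ carries $\mathcal{C}$ to itself, so it lies in $\mathrm{Stab}(\mathcal{C},\mathrm{PSL}_2(\mathbb{C}))$ (this also drops out of the matrix identity below). For the reverse inclusion I would use the inversion $\sigma(z)=D/\bar{z}$, the unique orientation-reversing involution of $\mathbb{C}^*$ with $\mathrm{Fix}(\sigma)=\mathcal{C}$.

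First I would record the equivalence: $g\in\mathrm{PSL}_2(\mathbb{C})$ stabilises $\mathcal{C}$ if and only if $g\sigma g^{-1}=\sigma$. The direction $(\Leftarrow)$ is trivial, as $g$ then commutes with $\sigma$ and so preserves $\mathrm{Fix}(\sigma)=\mathcal{C}$. For $(\Rightarrow)$, $g\sigma g^{-1}$ is an orientation-reversing involution of $\mathbb{C}^*$ with fixed set $g(\mathcal{C})=\mathcal{C}$; if $\tau$ is any such involution then $\tau\circ\sigma$ is a M\"obius transformation fixing every point of $\mathcal{C}$, hence fixing more than three points, hence the identity, so $\tau=\sigma$. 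Next I would pass to matrices. Since $\mathbb{C}$ is algebraically closed, $\mathrm{PSL}_2(\mathbb{C})=\mathrm{PGL}_2(\mathbb{C})$, so arbitrary invertible representatives may be used; in homogeneous coordinates $\sigma$ is the antilinear map $v\mapsto J\bar{v}$ with $J=\begin{pmatrix}0&D\\1&0\end{pmatrix}$, and $g\sigma g^{-1}=\sigma$ translates into $gJ\bar{g}^{-1}=\lambda J$, i.e.
\[
gJ=\lambda J\bar{g}\qquad\text{for some }\lambda\in\mathbb{C}^*.
\]

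Finally I would solve this relation. Writing $g=\begin{pmatrix}a&b\\c&e\end{pmatrix}$ and comparing entries gives $a=\lambda\bar{e}$, $e=\lambda\bar{a}$, $b=\lambda D\bar{c}$ and $Dc=\lambda\bar{b}$; combining the first two relations yields $a=|\lambda|^2a$ and combining the last two yields $b=|\lambda|^2b$, so $|\lambda|^2=1$ (as $g$ is invertible, $a$ and $b$ are not both $0$). Picking a unit $\mu$ with $\mu^2=\lambda$ and replacing $g$ by the projectively equal matrix $\mu^{-1}g$ collapses the relations to $\tilde{e}=\overline{\tilde{a}}$ and $\tilde{b}=D\,\overline{\tilde{c}}$, which is precisely the claimed shape with $x=\tilde{a}$, $y=\overline{\tilde{c}}$, while invertibility gives $|x|^2\neq D|y|^2$. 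The computation is otherwise routine; the two points that need a little care are the uniqueness of the reflection $\sigma$ (used in the equivalence) and the projective normalisation --- extracting the unit square root $\mu$ of $\lambda$ --- which is exactly what converts the ``equal up to a scalar'' identity into the symmetric form in the statement. A wholly elementary alternative, bypassing $\sigma$, is to impose $|az+b|^2=D|cz+e|^2$ for all $z=\sqrt{D}\,e^{i\theta}\in\mathcal{C}$, expand using $z\bar{z}=D$, and match the constant and first Fourier coefficients in $\theta$; this gives $a\bar{b}=Dc\bar{e}$ and $D|a|^2+|b|^2=D^2|c|^2+D|e|^2$, after which the same conclusion follows via a short case analysis (the spurious branch $|a|^2=D|c|^2$ being ruled out because it forces $\det g=0$).
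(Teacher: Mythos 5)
Your proposal is correct and complete. The paper offers no argument here at all (it introduces the lemma with ``from a quick computation, one can verify''), so there is nothing to match against; what you have written is a legitimate proof, and in fact two. Your main route --- characterising $\mathrm{Stab}(\mathcal{C})$ as the centraliser of the inversion $\sigma(z)=D/\bar z$, translating $g\sigma g^{-1}=\sigma$ into $gJ=\lambda J\bar g$ with $J=\left(\begin{smallmatrix}0&D\\1&0\end{smallmatrix}\right)$, showing $|\lambda|=1$, and absorbing a square root of $\lambda$ into the projective representative --- is more structural than the direct verification the paper gestures at; the entry-by-entry relations $a=\lambda\bar e$, $e=\lambda\bar a$, $b=\lambda D\bar c$, $Dc=\lambda\bar b$ and the normalisation by $\mu$ with $\mu^2=\lambda$, $|\mu|=1$ all check out. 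Your Fourier-coefficient alternative is essentially the ``quick computation'' the paper has in mind. One small point worth making explicit: your conclusion is that every stabilising element has \emph{some} invertible representative of the shape $\left(\begin{smallmatrix}x&Dy\\\bar y&\bar x\end{smallmatrix}\right)$, with $\det=|x|^2-D|y|^2$ real and nonzero but of either sign. That is the only sensible reading of the lemma (the paper imposes no determinant normalisation, and $\mathrm{PSL}_2(\mathbb{C})=\mathrm{PGL}_2(\mathbb{C})$); if one insisted on determinant-one representatives, the elements with $|x|^2-D|y|^2<0$, such as $z\mapsto D/z$, could not be put in this shape, so your care in recording only $|x|^2\neq D|y|^2$ rather than forcing $\det=1$ is exactly right.
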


We will realize the Fuchsian group $F_S$ as units of reduced norm $1$ of an integral order in a quaternion algebra over $\mathbb{Q}$.

For a field $F$ with characteristic $\neq 2$ and $a,b \in F^{\times}$, a quaternion algebra $\left(\frac{a,b}{F}\right)$ is a $4$-dimensional $F$-vector space with basis $\{1,i,j,k\}$ such that
\[
i^2=a,~j^2=b,~ij=k=-ji.
\]
For a quaternion algebra $B$ over $\mathbb{Q}$, a $\mathbb{Z}$-order $O\subset B$ is a lattice that is also a subring of $B$. Given a $\mathbb{Z}$-order $O$ in a quaternion algebra $B$ over $\mathbb{Q}$ and a prime $p$, we use the following lemma to compute the Eichler symbol $\left(\frac{O}{p}\right)=\left(\frac{O_p}{p}\right)$.
\begin{lemma}[\cite{Voight2017}]
Let $\Delta:B \to \mathbb{Q}$ be the discriminant quadratic form given by
\[
\Delta(\alpha) = \mathrm{tr}(\alpha)^2 - 4\mathrm{nrd}(\alpha).
\]
For $\varepsilon = -1,0,1$, we have $\left(\frac{O}{p}\right)=\varepsilon$ if and only if $\left(\frac{\Delta(\alpha)}{p}\right)$ takes the values $\{0,\varepsilon\}$ for $\alpha \in O_p$.
\end{lemma}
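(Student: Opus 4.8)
The plan is to work locally. Since $\left(\frac{O}{p}\right)=\left(\frac{O_p}{p}\right)$ depends only on $O_p\subset B_p:=B\otimes_{\mathbb Q}\mathbb Q_p$, I would push the discriminant form $\Delta$ down to the semisimple quotient $\overline{O}:=O_p/\mathrm{rad}(O_p)$ (where $\mathrm{rad}$ is the Jacobson radical), an $\mathbb F_p$-algebra of dimension $\le 4$, classify $\overline{O}$ up to isomorphism, and in each case read off the set of values of $\left(\frac{\Delta(\alpha)}{p}\right)$ directly. Observe already that $\Delta(1)=\mathrm{tr}(1)^2-4\,\mathrm{nrd}(1)=0$, so the value $0$ always occurs and the content of the lemma is which of $+1,-1$, if either, also occurs.

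First I would check that $\Delta\bmod p$ factors through $\overline{O}$. If $n\in\mathrm{rad}(O_p)$, then its reduction modulo $p$ lies in the radical of the finite-dimensional $\mathbb F_p$-algebra $O_p/pO_p$, hence is nilpotent; thus the minimal polynomial of this reduction over $\mathbb F_p$ is a power of $x$, and since it divides the reduced characteristic polynomial $x^2-\mathrm{tr}(n)x+\mathrm{nrd}(n)$ of $n$ read modulo $p$, we get $\mathrm{tr}(n),\mathrm{nrd}(n)\in p\mathbb Z_p$. As $pO_p\subset\mathrm{rad}(O_p)$, the conjugate $\bar n=\mathrm{tr}(n)-n$ lies in $\mathrm{rad}(O_p)$, and hence so does $\alpha\bar n$ for every $\alpha\in O_p$; applying the previous observation to $\alpha\bar n$ together with the polarization identity $\mathrm{nrd}(\alpha+n)=\mathrm{nrd}(\alpha)+\mathrm{tr}(\alpha\bar n)+\mathrm{nrd}(n)$ gives $\mathrm{tr}(\alpha+n)\equiv\mathrm{tr}(\alpha)$ and $\mathrm{nrd}(\alpha+n)\equiv\mathrm{nrd}(\alpha)\pmod p$, whence $\Delta(\alpha+n)\equiv\Delta(\alpha)\pmod p$. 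So $\mathrm{tr}$, $\mathrm{nrd}$ and $\Delta$ descend to $\mathbb F_p$-valued functions on $\overline{O}$, and it remains to evaluate them there.

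Next I would classify $\overline{O}$. Every element of $\overline{O}$ is a root of a quadratic over $\mathbb F_p$ (reduce the reduced characteristic polynomial), and idempotents lift from $\overline{O}$ to $O_p\subset B_p$: in the division case $\overline{O}$ is therefore local, so $\overline{O}\cong\mathbb F_p$ or $\mathbb F_{p^2}$, while in the split case $B_p\cong M_2(\mathbb Q_p)$ a primitive idempotent $\tilde e$ of $O_p$ has corner $\tilde e\,O_p\,\tilde e\cong\mathbb Z_p$, which forces $\overline{O}\cong\mathbb F_p$, $\mathbb F_{p^2}$, $\mathbb F_p\times\mathbb F_p$, or $M_2(\mathbb F_p)$. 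For each of these four possibilities I would compute the descended $\Delta$: (i) $\overline{O}\cong\mathbb F_p\times\mathbb F_p$, where lifting the two idempotents gives $\Delta(a,d)=(a-d)^2$, so the value set is $\{0,1\}$; (ii) $\overline{O}\cong\mathbb F_{p^2}$, where $\mathrm{tr}$ and $\mathrm{nrd}$ restrict to the field trace and norm and $\Delta(\alpha)$ is the discriminant of the minimal polynomial of $\alpha$ over $\mathbb F_p$, which vanishes on $\mathbb F_p$ and is a non-square otherwise, so the value set is $\{0,-1\}$; (iii) $\overline{O}\cong\mathbb F_p$, where $\mathrm{tr}(\alpha)=2\alpha$ and $\mathrm{nrd}(\alpha)=\alpha^2$ force $\Delta\equiv0$, so the value set is $\{0\}$; (iv) $\overline{O}\cong M_2(\mathbb F_p)$, where $\Delta=\mathrm{tr}^2-4\det$ visibly assumes every value in $\mathbb F_p$, so the value set is $\{-1,0,1\}$. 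By the definition of the Eichler symbol these four cases carry $\left(\frac{O}{p}\right)=1,-1,0$ and a value outside $\{-1,0,1\}$ respectively; since the four value sets are pairwise distinct, the value set determines the Eichler symbol, which yields the stated equivalences for $\varepsilon\in\{-1,0,1\}$.

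The reduction to $\overline{O}$ is the conceptual core of the argument, but the honest obstacle is the prime $p=2$. There $\left(\frac{\cdot}{2}\right)$ is the Kronecker symbol, which is sensitive to $\Delta(\alpha)$ modulo $8$ rather than modulo $2$, so reducing modulo the radical no longer suffices by itself; instead one must redo the case analysis on an explicit $2$-adic normal form for $O_2$ (an Eichler order, or the maximal order of the ramified quaternion algebra over $\mathbb Q_2$), bearing in mind that over $\mathbb F_2$ irreducibility of a quadratic is not detected by a non-square discriminant. For $p=2$ I would therefore either carry out this explicit computation or simply cite \cite{Voight2017}, where the characterization is established uniformly in $p$.
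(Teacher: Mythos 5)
The paper offers no proof of this lemma at all --- it is quoted verbatim from \cite{Voight2017} --- so the only meaningful comparison is with the argument in that reference, and what you propose is essentially that argument: reduce modulo the Jacobson radical, classify the semisimple quotient $\overline{O}$ into the four cases $\mathbb{F}_p$, $\mathbb{F}_{p^2}$, $\mathbb{F}_p\times\mathbb{F}_p$, $M_2(\mathbb{F}_p)$, and match the four distinct value sets of $\bigl(\frac{\Delta(\cdot)}{p}\bigr)$ with the four values of the Eichler symbol. Your descent of $\mathrm{tr}$, $\mathrm{nrd}$, $\Delta$ through the radical is correct, and the case-by-case evaluation is correct for odd $p$. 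The one thing to flag is that the ``honest obstacle'' at $p=2$ is not actually an obstacle for \emph{your} argument: writing $t=\mathrm{tr}(\alpha)$ and $m=\mathrm{nrd}(\alpha)$, if $t$ is even then $4\mid t^2-4m$ and the symbol is $0$, while if $t$ is odd then $t^2\equiv 1\pmod 8$ and $\Delta(\alpha)\equiv 1-4m\pmod 8$, which is $1$ or $5\pmod 8$ according as $m$ is even or odd; hence $\bigl(\frac{\Delta(\alpha)}{2}\bigr)$ is completely determined by $t\bmod 2$ and $m\bmod 2$, i.e.\ by the image of $\alpha$ in $\overline{O}$, which you have already shown carries well-defined trace and norm. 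Running your four cases through this dictionary (e.g.\ in $\mathbb{F}_4$ a generator has field trace $1$ and norm $1$, giving symbol $-1$; in $\mathbb{F}_2\times\mathbb{F}_2$ the element $(1,0)$ has trace $1$ and norm $0$, giving symbol $+1$; in $\mathbb{F}_2$ the trace $2a$ is always even, giving symbol $0$) reproduces exactly the same value sets $\{0,1\}$, $\{0,-1\}$, $\{0\}$, $\{-1,0,1\}$, so no separate $2$-adic normal form and no appeal to the citation are needed. With that supplement your proof is complete and uniform in $p$.
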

We refer the reader to \cite{Voight2017} for details about the theory of quaternion algebras.

\section{For $d \equiv 3 \pmod{4}$}
\subsection{Classification}
We first recall a classification theorem for $\mathcal{C}$ corresponding to immersed totally geodesic surfaces. Let $B_d$ be the extended Bianchi group, i.e., the maximal extension of $\Gamma_d = PSL_2(O_d)$ in $PSL_2(\mathbb{C})$.
\begin{theorem}[Theorem 6, \cite{Vulakh1993}]
Let $d$ be a positive square-free integer $\equiv 3 \pmod{4}$. Assume that there is no invariant of the ideal class group $\mathbb{Q}\lbrack \sqrt{-d}\rbrack$ which is divisible by $4$. Then each binary hermitian form is $B_d$ equivalent to one and only one of
\[
\mathcal{C}_{m,c}:\quad d|z|^2 + 2 Re (m\sqrt{-d}\bar{z}) +dc = 0,
\]
where $0\leq m < \frac{d}{2}$.
\end{theorem}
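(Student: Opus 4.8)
The plan is to prove the classification by reduction theory for integral binary hermitian forms under the extended Bianchi group $B_d$, in close analogy with Gauss's reduction of integral binary quadratic forms, and then to bring in genus theory for $\mathbb{Q}(\sqrt{-d})$ — which is where the hypothesis on $4$-divisibility in the class group is used — to turn the existence of a reduced representative into uniqueness. I would record a binary hermitian form as a triple $h=[a,B,c]$ with $a,c\in\mathbb{Z}$ and $B\in O_d$, attach to it the circle $\mathcal{C}_h : a|z|^2+2\mathrm{Re}(Bz)+c=0$ together with its discriminant $\Delta(h)=|B|^2-ac$ (honest circles being the forms with $\Delta(h)>0$), and note that $\mathcal{C}_h$ depends only on the $\mathbb{Q}^{\times}$-scaling class of $h$, so that the relevant equivalence on forms is the $B_d$-action together with rescaling; under this equivalence $\Delta$ is well defined only up to squares in $\mathbb{Q}^{\times}$, since the unit scalars, complex conjugation, translations and inversions preserve it exactly while the Atkin--Lehner-type involutions attached to $2$-torsion ideal classes multiply it by the square of an ideal norm.

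The first step is the reduction itself. The translations $\begin{pmatrix}1&\omega\\0&1\end{pmatrix}$ with $\omega\in O_d$ act by $B\mapsto B+a\bar\omega$ and $c\mapsto c+a|\omega|^{2}+2\mathrm{Re}(B\bar\omega)$, which lets me replace $B$ by a representative of small norm modulo $aO_d$, and the inversion $\begin{pmatrix}0&-1\\1&0\end{pmatrix}$ interchanges $a$ and $c$ while sending $B\mapsto-\bar B$. Alternating these two operations, with $\Delta$ held fixed, is a descent on $|a|$ that terminates — this is Bianchi's reduction theory for binary hermitian forms. I would then bring a reduced form into the stated normal shape using complex conjugation $z\mapsto\bar z$ (which sends $B\mapsto\bar B$), the residual translations, and rescaling: the center of $\mathcal{C}_h$ can be moved onto the imaginary axis with height a multiple of $1/\sqrt d$, and the equation scaled so that the leading coefficient equals $d$, producing a representative of the form $\mathcal{C}_{m,c}$; since translations in the imaginary direction shift $m$ by multiples of $d$, conjugation sends $m\mapsto-m$, and $d$ is odd, one can force $0\le m<d/2$. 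This yields the existence half of the statement.

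Uniqueness is the hard part, and it is precisely here that the class-group hypothesis is needed. If $\mathcal{C}_{m,c}$ and $\mathcal{C}_{m',c'}$ are $B_d$-equivalent then their discriminants agree up to squares, which is already a strong constraint relating $(m,c)$ and $(m',c')$; what must be ruled out is that two genuinely different pairs with matching invariants get identified. The cleanest way I see to organize this is to realize $\mathrm{Stab}(\mathcal{C}_{m,c})$ as the norm-one unit group of an explicit $\mathbb{Z}$-order in a quaternion algebra over $\mathbb{Q}$ containing $\mathbb{Q}(\sqrt{-d})$ — the same construction used in the introduction to read off the area — so that $B_d$-equivalence of the circles becomes conjugacy of these orders inside the algebra, hence an equivalence of associated primitive positive-definite binary quadratic forms, equivalently of ideal classes, whose discriminant is controlled by $\Delta$. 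The residual ambiguity between two $\mathcal{C}_{m,c}$'s with equal invariants is then genus-theoretic, and the assumption that no invariant of $\mathrm{Cl}(\mathbb{Q}(\sqrt{-d}))$ is divisible by $4$ — equivalently, that its $2$-Sylow subgroup is elementary abelian — is exactly what makes genus theory sharp enough to force that ambiguity to collapse, so that the reduced form is unique and the list $\{\mathcal{C}_{m,c}:0\le m<d/2\}$ is irredundant. Carrying out this last bookkeeping, matching $B_d$-orbits against ambiguous ideal classes and isolating where the $4$-divisibility is invoked, is where I expect the real work to lie; this also fits the remark in the paper that the clean list should persist for all $d\equiv3\pmod4$, the restriction looking like the natural boundary of the elementary genus-theory argument rather than something intrinsic to the geometry.
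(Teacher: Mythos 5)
The paper does not prove this statement at all: it is quoted as Theorem 6 of \cite{Vulakh1993} and used as a black box, so there is no internal proof to compare yours against. Judged on its own terms, your write-up is a strategy outline rather than a proof, and the gap sits exactly where you say you ``expect the real work to lie.'' On the existence side, the step where you pass from a Bianchi-reduced form to one with leading coefficient exactly $d$ and with $B$ an integer multiple of $\sqrt{-d}$ is not justified: translations, the inversion, conjugation and rescaling alone cannot achieve this normalization for a form whose associated $O_d$-ideal class is non-principal. It is precisely the extra cosets of $B_d$ over $PGL_2(O_d)$ --- the matrices $\sigma_r$ with $\det\sigma_r=r\mid d$ recorded in the paper's Lemma 3.2 --- together with the $2$-part of the class group that make the reduction to the shape $\mathcal{C}_{m,c}$ possible, and your sketch never engages with the structure of $B_d/PGL_2(O_d)$, which is where the arithmetic content lives. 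On the uniqueness side, you correctly observe that the hypothesis ``no invariant of the class group divisible by $4$'' means the $2$-Sylow subgroup is elementary abelian and that genus theory should then separate orbits, but you neither carry out the matching of $B_d$-orbits with genus classes nor verify that distinct pairs $(m,c)\neq(m',c')$ with $0\le m,m'<d/2$ give inequivalent forms; the discriminant comparison you invoke only pins down $m^2-dc$ up to square factors coming from the $\sigma_r$, which by itself does not distinguish the normal forms. As written, the proposal names the right ingredients but establishes neither half of the theorem.
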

Here $B_d$ is defined as follows. For $\alpha, \beta, \ldots \in O_d$, let $N(\alpha, \beta, \ldots)$ be the norm of the ideal in $O_d$ generated by $\alpha, \beta, \ldots$. Then $B_d$ is the image of
\[
L_d = \{\sigma = \begin{pmatrix}\alpha & \gamma\\ \beta & \delta\end{pmatrix} \in M_2 (O_d): \det(\sigma) = \varepsilon N(\alpha, \beta, \gamma, \delta), \text{ for some unit } \varepsilon \in O_d\}.
\]
\begin{lemma}[(2.4), \cite{Vulakh1993}]
When $d \equiv 3 \pmod{4}$, for each square-free $r|d$, choose a pair of integers $u,v$ such that $\frac{d}{r}u- rv = 1$, and let
\[
\sigma_r = \begin{pmatrix}\sqrt{-d} & r \\ vr & - u\sqrt{-d} \end{pmatrix} \in L_d,
\]
so that $\det(\sigma_r) = r $. Then $\{\sigma_r:r<\sqrt{d}\}$ is a complete set of coset representatives of $B_d/ PGL_2(O_d)$.
\end{lemma}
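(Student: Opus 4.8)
\medskip
\noindent\textbf{Proof strategy.} The plan is to attach to each $\sigma\in L_d$ its column lattice $M(\sigma)=O_d\binom{\alpha}{\beta}+O_d\binom{\gamma}{\delta}\subseteq O_d^2$ and to identify $B_d/PGL_2(O_d)$ with the (finitely many) homothety classes of such lattices. First I would verify the asserted facts about $\sigma_r$. Expanding the determinant gives $\det\sigma_r=ud-vr^2$, and multiplying the relation $\frac dr u-rv=1$ by $r$ gives $ud-vr^2=r$, so $\det\sigma_r=r$. The ideal generated by the entries of $\sigma_r$ is $(\sqrt{-d},r)$ (since $vr\in(r)$ and $u\sqrt{-d}\in(\sqrt{-d})$); since $d$ is squarefree, writing $(p)=\mathfrak p^2$ for each ramified prime $p\mid d$ one has $(\sqrt{-d})=\prod_{p\mid d}\mathfrak p$ and $(r)=\prod_{p\mid r}\mathfrak p^2$, so $(\sqrt{-d},r)=\prod_{p\mid r}\mathfrak p$, an ideal of norm $r=\det\sigma_r$. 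Hence $\sigma_r\in L_d$; its content ideal is a squarefree product of ramified primes, hence divisible by no rational integer $>1$. Call a matrix in $L_d$ with this last property \emph{primitive}; dividing out the largest rational integer dividing all entries shows every coset of $B_d/PGL_2(O_d)$ is represented by a primitive matrix.

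Next I would describe the primitive elements of $L_d$. For such a $\sigma$ with content ideal $\mathfrak m$, the defining relation $\det\sigma=\varepsilon N(\mathfrak m)$ together with $N(\mathfrak m)O_d=\mathfrak m\overline{\mathfrak m}$ gives $(\det\sigma)=\mathfrak m\overline{\mathfrak m}$, while all four entries lie in $\mathfrak m$, so $(\det\sigma)\subseteq\mathfrak m^2$; thus $\mathfrak m\overline{\mathfrak m}\subseteq\mathfrak m^2$, i.e. $\overline{\mathfrak m}\subseteq\mathfrak m$, and conjugating yields $\mathfrak m=\overline{\mathfrak m}$. A self-conjugate ideal divisible by no rational integer $>1$ is exactly $\mathfrak r:=\prod_{p\in S}\mathfrak p$ for some set $S$ of ramified primes, so $N(\mathfrak m)=\prod_{p\in S}p=:r$ is a squarefree divisor of $d$, $\det\sigma=\pm r$, and $\mathfrak r^2=(r)$. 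Since the entries lie in $\mathfrak r$ we get $M(\sigma)\subseteq\mathfrak r\oplus\mathfrak r$, and because $\wedge^2 M(\sigma)=(\det\sigma)=(r)=\mathfrak r^2=\wedge^2(\mathfrak r\oplus\mathfrak r)$ the two lattices have equal index in $O_d^2$, whence $M(\sigma)=\mathfrak r\oplus\mathfrak r$. Conversely every such lattice is free of rank $2$ (its Steinitz class $[\mathfrak r^2]$ is trivial), hence is the column lattice of some $\sigma\in M_2(O_d)$, and that $\sigma$ lies in $L_d$ because $|\det\sigma|=r=N(\mathfrak r)$ is the norm of its content ideal.

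Finally I would pass to cosets. One has $[\sigma]\in[\sigma']\,PGL_2(O_d)$ exactly when $\sigma=\lambda\sigma'g$ for some $g\in GL_2(O_d)$ and some scalar $\lambda$, which (both matrices lying over $\mathbb Q(\sqrt{-d})$ with nonzero determinant) forces $\lambda\in\mathbb Q(\sqrt{-d})^\times$ and is equivalent to $M(\sigma)=\lambda M(\sigma')$; so a coset is precisely a homothety class of column lattices. By the previous paragraph such a class is represented by $\mathfrak r\oplus\mathfrak r$, and $\mathfrak r\oplus\mathfrak r$ is homothetic to $\mathfrak r'\oplus\mathfrak r'$ iff $(\lambda)\mathfrak r=\mathfrak r'$ for some $\lambda$. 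Then $(\lambda)=\mathfrak r'\mathfrak r^{-1}$ is self-conjugate, so $\lambda/\overline\lambda\in O_d^\times=\{\pm1\}$ for $d>3$ (the case $d=3$ is immediate, its only ramified ideal $(\sqrt{-3})$ being principal), hence $\lambda\in\mathbb Q$ or $\lambda\in\mathbb Q\sqrt{-d}$; checking valuations one prime at a time then shows $(\lambda)\mathfrak r$ is again a squarefree product of ramified primes only when $\lambda=\pm1$ (giving $r'=r$) or $\lambda=\pm\sqrt{-d}/r$ (giving $r'=d/r$). Therefore $B_d/PGL_2(O_d)$ is in bijection with the squarefree divisors of $d$ modulo $r\leftrightarrow d/r$; as $d$ is squarefree this set has $\tau(d)/2$ elements, with $\{r:r<\sqrt d\}$ a transversal, and the bijection is realized by $r\mapsto[\sigma_r]$ since $\sigma_r$ is primitive with content ideal $\prod_{p\mid r}\mathfrak p$. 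The main obstacle is the middle paragraph --- obtaining the self-conjugacy of the content ideal and the exact equality $M(\sigma)=\mathfrak r\oplus\mathfrak r$; once those are secured, the remaining steps are routine ideal bookkeeping in $O_d$.
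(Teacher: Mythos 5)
Your argument is correct, but there is nothing in the paper to compare it against: the paper does not prove this lemma at all, it simply imports it from Vulakh's 1993 paper (it is quoted as ``(2.4), [Vulakh]''). So what you have produced is a self-contained replacement for a citation, and the lattice-theoretic route you take is sound. The key computations all check out: $\det\sigma_r = ud - vr^2 = r$; the content ideal of $\sigma_r$ is $(\sqrt{-d},r)=\prod_{p\mid r}\mathfrak p$ of norm $r$ (using $(\sqrt{-d})=\prod_{p\mid d}\mathfrak p$ for squarefree $d$), so $\sigma_r\in L_d$; the self-conjugacy of the content ideal of a primitive element of $L_d$ follows correctly from $\mathfrak m\overline{\mathfrak m}=(\det\sigma)\subseteq\mathfrak m^2$; the identification $M(\sigma)=\mathfrak r\oplus\mathfrak r$ via equality of the top exterior powers is the standard index argument and is valid; and the homothety analysis (forcing $\lambda=\pm1$ or $\lambda=\pm\sqrt{-d}/r$, using $O_d^\times=\{\pm1\}$ for $d>3$ and handling $d=3$ separately) correctly yields the involution $r\leftrightarrow d/r$ on squarefree divisors, hence $\tau(d)/2$ cosets with transversal $\{r<\sqrt d\}$. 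Two small points worth making explicit if you write this up: (i) the reduction to the coset space requires checking that $GL_2(O_d)\subseteq L_d$ and that $L_d$ is stable under right multiplication by $GL_2(O_d)$ up to the defining condition --- both are immediate since right multiplication by a unimodular matrix preserves the column lattice and multiplies the determinant by a unit, but they are what make ``coset $=$ homothety class of column lattices'' legitimate; (ii) your converse step (every $\mathfrak r\oplus\mathfrak r$ arises from $L_d$) is not actually needed for the statement --- surjectivity onto cosets already follows from the forward direction plus the homothety $\lambda=\sqrt{-d}/r$ --- so you could trim it. Note also that your proof does not visibly use the hypothesis that no invariant of the class group is divisible by $4$; that hypothesis enters the paper only through the companion classification theorem (Theorem~6 of Vulakh) for the forms $\mathcal C_{m,c}$, not through this coset lemma, which is consistent with your argument going through unconditionally.
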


Combining above two results, we see that $\{\sigma_r^{-1} \mathcal{C}_{m,c}\}$ parameterize every immersed totally geodesic surfaces, where $0\leq m<d$. (Here we replaced  $d/2$ by $d$ to amount to the fact that $PGL_2(O_d)/PSL_2(O_d)$ has two elements, the identity and $\left(\begin{array}{cc}1 & 0 \\ 0 & -1\end{array}\right)$. Action of the latter element maps $\mathcal{C}_{m,c}$ to $\mathcal{C}_{-m,c}$)

\subsection{Volume computation through quaternion algebras}
We are going to use the following volume formula to compute the area the totally geodesic surface corresponding to $\sigma_r^{-1} \mathcal{C}_{m,c}$,.

\begin{theorem}[\cite{Voight2017}]\label{thm1}
Let $B$ be an indefinite quaternion algebra over $\mathbb{Q}$, and let $O \subset B$ be a $\mathbb{Z}$-order of reduced discriminant $D$. Let $\Gamma^1(O) \subset PSL_2(\mathbb{R})$ be the discrete group associated to the group $PO^1 =O^1/\{\pm 1\}$ of units of reduced norm $1$. Then we have
\[
vol(\Gamma^1(O)\backslash \mathbb{H}) = \frac{\pi}{3} D \prod_{p|D} \lambda(O,p) / \prod_{p|D} \left\lbrack\mathbb{Z}_p^\times : \mathrm{nrd}\left(O_p^\times\right)\right\rbrack,
\]
where
\[
\lambda(O,p) = \frac{1-p^{-2}}{1-\left(\frac{O}{p}\right)p^{-1}},
\]
and $\left(\frac{O}{p}\right)$ is the Eichler symbol. Here $\mathrm{nrd}$ is the reduced norm map.
\end{theorem}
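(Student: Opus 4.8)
This is Voight's area formula (Main Theorem 39.1.8 with Remark 39.1.13 in \cite{Voight2017}); here we only outline the argument. The plan is to pass to an adelic volume, so that the single global input is a Tamagawa number and everything else becomes a product of local computations. Write $G = \mathrm{SL}_1(B)$; since $B$ is indefinite, $G(\mathbb{R}) = \mathrm{SL}_2(\mathbb{R})$.

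One first reformulates adelically. As $G$ is simply connected with $G(\mathbb{R})$ noncompact, strong approximation shows $G(\mathbb{Q})$ is dense in $G(\mathbb{A}_f)$; since $\widehat{O}^1 := \prod_p O_p^1$ is open and compact this gives $G(\mathbb{Q})\cdot\big(G(\mathbb{R})\times\widehat{O}^1\big) = G(\mathbb{A})$, so the relevant class number is $1$ and $G(\mathbb{Q})\backslash G(\mathbb{A})/\widehat{O}^1 \cong O^1\backslash\mathrm{SL}_2(\mathbb{R})$. Fix the Tamagawa measure on $G(\mathbb{A})$, factored as $\prod_v\mu_v$; Weil's theorem $\tau(G) = 1$ then gives
\[
\mathrm{vol}_{\mu_\infty}\!\big(O^1\backslash\mathrm{SL}_2(\mathbb{R})\big) \;=\; \prod_p\mu_p(O_p^1)^{-1}.
\]
Converting Haar measure on $\mathrm{SL}_2(\mathbb{R})$ to hyperbolic area on $\mathbb{H} = \mathrm{SL}_2(\mathbb{R})/\mathrm{SO}(2)$ (on which $\pm 1$ acts trivially, so $O^1\backslash\mathbb{H} = \Gamma^1(O)\backslash\mathbb{H}$) introduces one normalizing constant; this constant, and the convergence factors $(1-p^{-2})^{-1}$ built into $\mu_p$, are all determined at once by the base case $O = M_2(\mathbb{Z})$: there $\Gamma^1(O) = \mathrm{SL}_2(\mathbb{Z})$, $D = 1$, $\mathrm{vol}(\mathrm{SL}_2(\mathbb{Z})\backslash\mathbb{H}) = \pi/3$, and every local correction below is trivial. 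This yields the prefactor $\pi/3$ and reduces the theorem to evaluating, for each $p$, the ratio of $(1-p^{-2})$ to $\mu_p(O_p^1)$.

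For $p\nmid D$ the local order $O_p$ is maximal — equivalently the discriminant quadratic form $\Delta$ on $O_p$ is unimodular — so $O_p^1\cong\mathrm{SL}_2(\mathbb{Z}_p)$ and the ratio is $1$, contributing nothing. For $p\mid D$ one uses the local structure theory of quaternion orders: the isomorphism type of $O_p$, and hence $\mu_p(O_p^1)$, is governed by $\Delta$ on $O_p$, whose relevant coarse invariant is precisely the Eichler symbol $\left(\frac{O}{p}\right)\in\{-1,0,1\}$ (this is the content of the lemma of \S2 relating $\left(\frac{O}{p}\right)$ to $\Delta$). A direct reduction-mod-$\mathfrak p$ count in each of the three cases — the Eichler and ramified analogues of the computation of $|\mathrm{SL}_2(\mathbb{F}_p)|$ — gives, for $p\mid D$,
\[
\frac{1-p^{-2}}{\mu_p(O_p^1)} \;=\; p^{\,v_p(D)}\,\frac{\lambda(O,p)}{\big[\mathbb{Z}_p^\times:\mathrm{nrd}(O_p^\times)\big]},
\]
the index being the correction of Remark 39.1.13 needed when $\mathrm{nrd}(O_p^\times)$ is a proper subgroup of $\mathbb{Z}_p^\times$. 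Multiplying over $p\mid D$ and using $\prod_{p\mid D}p^{v_p(D)} = D$ then gives the stated formula.

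The only substantive step is the last paragraph: classifying the local orders $O_p$ (equivalently the ternary forms $\Delta$) and evaluating $\mu_p(O_p^1)$ case by case, with correct bookkeeping of the reduced-norm index; everything preceding it is either standard (strong approximation, $\tau(G) = 1$) or pinned down by one base case. An alternative route avoids the Tamagawa-number input by quoting Shimizu's formula $\mathrm{vol}(\Gamma^1(O')\backslash\mathbb{H}) = \frac{\pi}{3}\prod_{p\mid\mathrm{disc}\,B}(p-1)$ for a maximal order $O'\supseteq O$ and computing the finite index $[\Gamma^1(O'):\Gamma^1(O)] = \prod_p[(O'_p)^1:O_p^1]$ (again via strong approximation), but this reduces to the same local computations.
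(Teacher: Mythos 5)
This theorem is not proved in the paper at all: it is imported verbatim from Voight's book (Main Theorem 39.1.8 together with Remark 39.1.13), and the paper's only ``proof'' is the citation. Your sketch is therefore doing strictly more than the paper does, and it is a faithful outline of how the result is actually established in the literature: strong approximation for $\mathrm{SL}_1(B)$ (valid since $B$ is indefinite, so $G(\mathbb{R})$ is noncompact) reduces the class number to $1$; Weil's theorem $\tau(G)=1$ supplies the single global input; the base case $O=M_2(\mathbb{Z})$ with $\mathrm{vol}(\mathrm{SL}_2(\mathbb{Z})\backslash\mathbb{H})=\pi/3$ pins down all measure normalizations; and the primes $p\nmid D$ contribute trivially because $O_p\cong M_2(\mathbb{Z}_p)$ there. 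Your bookkeeping of the local identity is also consistent: the fibration $1\to O_p^1\to O_p^\times\to \mathrm{nrd}(O_p^\times)\to 1$ makes $\mu_p(O_p^1)$ proportional to $[\mathbb{Z}_p^\times:\mathrm{nrd}(O_p^\times)]$, which is exactly why that index ends up dividing the volume, as in Remark 39.1.13. The one caveat is the one you flag yourself: the entire mathematical content of the theorem is concentrated in the case-by-case evaluation of $\mu_p(O_p^1)$ for $p\mid D$ in terms of the Eichler symbol, and that computation is asserted rather than carried out, so what you have is a correct roadmap rather than a self-contained proof. Since the paper treats the statement as a black box, this is an acceptable (indeed more informative) substitute, but if a full proof were required you would need to supply the local classification of orders $O_p$ via the reduction of the ternary discriminant form $\Delta$ and the resulting point counts, or else simply retain the citation as the paper does.
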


To this end, we associate to each
\[
a|z|^2+2\mathrm{Re}(Bz) + c=0
\]
a $\mathbb{Z}$ order in an appropriate quaternion algebra.

\begin{theorem}\label{thm2}
Assume that $B=b\sqrt{-d}$ is purely imaginary, and that $(a,b,c)=1$. Let $D=b^2d -ac$ and let $\rho$ be a representation of the quaternion algebra $\left(\frac{-d,D}{\mathbb{Q}}\right)$ given by
\[
\rho(i) = \begin{pmatrix}\sqrt{-d}&0\\0&-\sqrt{-d}\end{pmatrix}, \quad \rho(j) = \begin{pmatrix}0&D\\1&0\end{pmatrix}.
\]
Let $T= \begin{pmatrix}a& B\\0& 1\end{pmatrix}$, and let $\rho' = T^{-1}\rho T$. Denoting by $d_0$ the greatest common divisor of $a$, $bd$, and $c$, the stabilizer group of $a|z|^2 + 2 Re (B\bar{z}) +c = 0$ in $\Gamma_d$ is given by $P_{\rho'}(M^1)$ where
\[
M = \mathbb{Z}\left\lbrack 1, \frac{\alpha_1( 1+i)}{2} , \frac{\beta (i+1)}{2} + \frac{  bi+j}{d_0 \alpha_1 },\frac{-bd - bi -j+ij}{2d_0} \right\rbrack,
\]
for some integers $0\leq \beta < a/d_0$, and $\alpha_1 | a/d_0$. The reduced discriminant of $M$ is given by $\frac{dD}{d_0^2}$.
\end{theorem}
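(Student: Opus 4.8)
The strategy is to realize the stabilizer as the group of reduced‑norm‑one units of an explicit $\mathbb{Z}$‑order, in three steps: describe $\mathrm{Stab}(\mathcal C, PSL_2(\mathbb C))$ where $\mathcal C$ is the circle $a|z|^2 + 2\mathrm{Re}(B\bar z) + c = 0$; intersect with $\Gamma_d$ by transporting the integrality condition through $\rho'$; and then solve the resulting congruences to read off $M$ and its reduced discriminant. One checks first that $\rho$ is a $\mathbb{Q}$‑algebra embedding of $B=\left(\frac{-d,D}{\mathbb Q}\right)$ into $M_2(\mathbb Q(\sqrt{-d}))$ — the relations $\rho(i)^2=-dI$, $\rho(j)^2 = DI$, $\rho(i)\rho(j) = -\rho(j)\rho(i)$ are immediate — and that $B$ is indefinite precisely because $a\neq 0$ (so that $T$ is invertible) and $D=b^2d-ac>0$ for a genuine circle, so that Theorem \ref{thm1} will apply. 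Next, a direct computation using $|B|^2 = b^2d$ shows that the Möbius transformation $T\colon z\mapsto az+B$ carries $\mathcal C$ to the circle $|w|^2=D$. Hence, by the first Lemma, any $g_0\in GL_2(\mathbb C)$ fixing $\mathcal C$ satisfies $Tg_0T^{-1} = \left(\begin{smallmatrix} x & Dy\\ \bar y & \bar x\end{smallmatrix}\right)$; when the entries $x,y$ lie in $\mathbb Q(\sqrt{-d})$ this matrix is exactly $\rho(\alpha)$ for the $\alpha\in B$ whose coordinates are the $\sqrt{-d}$‑real and $\sqrt{-d}$‑imaginary parts of $x$ and $y$, with $\mathrm{nrd}(\alpha) = \det\rho(\alpha)$.

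Set $\Lambda = \{\alpha\in B : \rho'(\alpha)\in M_2(O_d)\}$. Since $\rho'$ is an injective ring homomorphism and $M_2(O_d)$ is an order in $M_2(\mathbb Q(\sqrt{-d}))$, the set $\Lambda$ is a $\mathbb{Z}$‑order in $B$. I claim $\mathrm{Stab}(\mathcal C,\Gamma_d) = P_{\rho'}(\Lambda^1)$, where $\Lambda^1$ is the group of units of reduced norm one. If $\alpha\in\Lambda^1$ then $\rho'(\alpha)\in SL_2(O_d)$ fixes $\mathcal C$, so $P_{\rho'}(\alpha)$ lies in the stabilizer. Conversely, given $g$ in the stabilizer, lift it to $\tilde g\in SL_2(O_d)$; since $T\in M_2(O_d)$ has $\det T=a\in\mathbb Z$, the matrix $T\tilde gT^{-1}$ has entries in $\mathbb Q(\sqrt{-d})$, so by Step 1 it equals $\rho(\alpha)$ with $\alpha\in B$ and $\mathrm{nrd}(\alpha)=\det\tilde g=1$; then $\rho'(\alpha) = \tilde g\in M_2(O_d)$ forces $\alpha\in\Lambda^1$ and $g=P_{\rho'}(\alpha)$. (In particular no element of $\Gamma_d$ fixing $\mathcal C$ can reverse the two half‑spaces the geodesic plane bounds, as such an element admits no lift to $SL_2(O_d)$.) It therefore remains to prove $\Lambda = M$ and to compute $\mathfrak d(M)$.

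To identify $\Lambda$ I would write out the four matrices $\rho'(1)=I$, $\rho'(i) = \left(\begin{smallmatrix}\sqrt{-d} & -2bd/a\\ 0 & -\sqrt{-d}\end{smallmatrix}\right)$, $\rho'(j)$, and $\rho'(ij)$, whose off‑diagonal entries lie in $\frac1a\mathbb Z[\sqrt{-d}]$ and involve $bd$, $c$, and $D$; then for $\alpha = t+xi+yj+zij$ the condition $\rho'(\alpha)\in M_2(O_d)$ unwinds into four membership conditions in $O_d = \mathbb Z\left[\frac{1+\sqrt{-d}}{2}\right]$, that is, a system of congruences and divisibility relations on the rational coordinates $t,x,y,z$ in which the modulus $a$ and $d_0=\gcd(a,bd,c)$ appear. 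Solving this system — putting the solution lattice in Hermite normal form with respect to $1,i,j,ij$, or equivalently analyzing $\Lambda\otimes\mathbb Z_p$ one prime at a time — produces a $\mathbb Z$‑basis; the divisor $\alpha_1\mid a/d_0$ records the exact denominator forced on the "$j$‑coordinate" relative to $d_0$, and $0\le\beta<a/d_0$ the residue coupling the "$i$‑" and "$j$‑coordinates", yielding precisely the four generators of $M$ listed in the statement. One then checks directly that these four generators are closed under multiplication, which re‑confirms that $M=\Lambda$ is an order.

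The reduced discriminant is then immediate. Writing the basis $e_1=1$, $e_2 = \frac{\alpha_1(1+i)}{2}$, $e_3$, $e_4$ of $M$ in terms of $1,i,j,ij$ gives a lower‑triangular change‑of‑basis matrix $C$ with $|\det C| = \frac{1}{4d_0^2}$; since the Gram matrix of the reduced trace pairing on $\mathbb Z\langle 1,i,j,ij\rangle$ is diagonal with $|\det| = 16d^2D^2$ (so $\mathbb Z\langle 1,i,j,ij\rangle$ has reduced discriminant $4dD$), the order $M$ has reduced discriminant $\sqrt{(\det C)^2\cdot 16d^2D^2} = dD/d_0^2$. I expect the main obstacle to be Step 3: carefully disentangling the four integrality conditions over $\mathbb Z\left[\frac{1+\sqrt{-d}}{2}\right]$ — in particular tracking the powers of $2$ introduced by the half‑integral generator of $O_d$, and isolating the divisibilities that pin down $\alpha_1$ and $\beta$ — and then verifying multiplicative closure of the resulting basis; by contrast the geometric identification of the stabilizer and the final discriminant computation are short.
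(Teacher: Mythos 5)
Your proposal follows essentially the same route as the paper: conjugate the circle to $|z|^2=D$ by $T$, realize the stabilizer as the reduced-norm-one units of the order $\Lambda=\{\alpha: \rho'(\alpha)\in M_2(O_d)\}$, and extract a $\mathbb{Z}$-basis by solving the resulting integrality conditions --- the paper carries out exactly the Hermite-normal-form step you sketch, via the substitution $k=2az$, $l=ay+az$, $m=2x-2by$, $n=t-x+by+bdz$, which reduces the system to $k,l,m,n,\tfrac{c}{a}k,\tfrac{bd}{a}k,\tfrac{bd}{a}m+\tfrac{c}{a}l\in\mathbb{Z}$. Your explicit two-sided identification of the stabilizer and your change-of-basis computation of the reduced discriminant ($|\det C|=\tfrac{1}{4d_0^2}$ against the standard basis of reduced discriminant $4dD$) are correct and in fact supply details the paper leaves implicit.
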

\begin{proof}
We first note that $T$ maps
\[
a|z|^2 + 2 Re (B\bar{z}) +c = 0
\]
to
\[
|z|^2 = |B|^2-ac=D,
\]
so the image of $\rho'$ fixes $a|z|^2+2\mathrm{Re}(B\bar{z})+c=0$.

It follows from the definition that
\[
\rho'(i) = \begin{pmatrix}\sqrt{-d} & \frac{-2b d}{a}\\0&-\sqrt{-d} \end{pmatrix} \quad \rho'(j)=\begin{pmatrix} -b\sqrt{-d}&\frac{db^2+D}{a}\\a& b\sqrt{-d}\end{pmatrix} \quad \rho'(ij) = \begin{pmatrix} bd&-c\sqrt{-d}\\-a\sqrt{-d}& -bd \end{pmatrix},
\]
and
\begin{align*}
\rho(t+xi+yj+zij) &= \begin{pmatrix} t+ \sqrt{-d}x - b\sqrt{-d}y+bdz & -\frac{2bd}{a}x+\frac{db^2+D}{a}y-c\sqrt{-d}z\\ay-a\sqrt{-d}z&t - \sqrt{-d}x+b\sqrt{-d}y-bdz \end{pmatrix}\\
&=\begin{pmatrix} t-x+by+bdz +\frac{\sqrt{-d}+1}{2}(2x - 2by) & -\frac{2bd}{a}(x-by)-cy+cz-\frac{\sqrt{-d}+1}{2}2cz\\ay+az-\frac{\sqrt{-d}+1}{2}2az& t+ x-by-bdz- \frac{\sqrt{-d}+1}{2}(2x-2by) \end{pmatrix}.
\end{align*}
Therefore this is in $M_2(O_d)$ if and only if
\[
2az,\quad 2cz,\quad ay+az,\quad 2x-2by,\quad t-x+by+bdz,\quad t+x-by-bdz, \quad -\frac{2bd}{a}(x-by) -cy+cz \in \mathbb{Z}.
\]
Substituting by
\[
k = 2az, \quad l=ay+az, \quad m=2x-2by, \quad n = t-x+by+bdz,
\]
we see that this is equivalent to
\[
k,\quad l, \quad m,\quad n,\quad \frac{c}{a} k, \quad \frac{bd}{a} k , \quad \frac{bd}{a}m + \frac{c}{a}l \in \mathbb{Z}.
\]
Let $d_0=gcd(a,bd,c)=gcd(a,d,c)$. Then one can prove that there exists an upper-triangular $2\times 2$ matrix
\[
S= \begin{pmatrix}\alpha_1 & \beta \\ 0 &\alpha_2\end{pmatrix}
\]
with $\alpha_1 \alpha_2 = a/d_0$ such that
\[
\frac{bd}{a}m + \frac{c}{a}l \in \mathbb{Z}
\]
if and only if $\begin{pmatrix}m\\l\end{pmatrix} = S \begin{pmatrix}m_0\\l_0\end{pmatrix}$ for some $m_0,l_0 \in\mathbb{Z}$. Also, $k = \frac{a}{d_0} k_0$ for some $k_0\in \mathbb{Z}$. Combining all these, any $t+xi+yj+zij$ that is mapped under $\rho'$ to an element of $M_2(O_d)$ has the following form,
\begin{align*}
t+xi+yj+zij &= \frac{k}{2a}ij + \left(\frac{l}{a}-\frac{k}{2a}\right)j + \left(\frac{ bl}{a}-\frac{bk}{2a}+\frac{m}{2}\right)i + \frac{m}{2}+n-\frac{bdk}{2a}\\
&=n+\frac{\alpha_1( 1+i)}{2} m_0 + \left(\frac{\beta (i+1)}{2} + \frac{  bi+j}{d_0 \alpha_1 }\right)l_0 + \left(\frac{-bd - bi -j+ij}{2d_0}\right)k_0,
\end{align*}
and so we complete the proof.
\end{proof}

We need to compute the Eichler symbol and the index of the image of reduced norm map of the $\mathbb{Z}$-order $M$ to apply Theorem \ref{thm1}.

\begin{lemma}\label{lem1}
Let
\[
M = \mathbb{Z}\left\lbrack 1, \frac{\alpha_1( 1+i)}{2} , \frac{\beta (i+1)}{2} + \frac{  bi+j}{d_0 \alpha_1 },\frac{-bd - bi -j+ij}{2d_0} \right\rbrack,
\]
as in Theorem \ref{thm2}. For an odd prime $p | dD/d_0^2$,
\[
\left(\frac{M}{p}\right) = \left(\frac{-d}{p}\right)+\left(\frac{D}{p}\right).
\]
Also,
\[
 \left\lbrack\mathbb{Z}_p^\times : \mathrm{nrd}\left(M_p^\times\right)\right\rbrack
\]
is equal to $2$ if $p|\left(\frac{d}{d_0}, \frac{D}{d_0}\right)$ and $1$ otherwise.
\end{lemma}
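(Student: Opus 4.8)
The plan is to apply the criterion above expressing the Eichler symbol $\left(\tfrac{O}{p}\right)$ via the discriminant quadratic form $\Delta$: both assertions reduce to understanding, modulo squares in $\mathbb{Z}_p^\times$, the values of $\mathrm{nrd}$ on $M_p$ and of $\Delta$. In $B=\left(\frac{-d,D}{\mathbb{Q}}\right)$ one has $\mathrm{trd}(t+xi+yj+zij)=2t$ and $\mathrm{nrd}(t+xi+yj+zij)=t^2+dx^2-Dy^2-dDz^2$, hence $\Delta(\alpha)=-4\,\mathrm{nrd}(\alpha_0)$, where $\alpha_0$ is the traceless part of $\alpha$. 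One preliminary fact is used throughout: if an odd prime $p$ divides $dD/d_0^2$ then $p\nmid d_0$. Indeed, if $p\mid d_0$ then $p$ divides $d$, $a$ and $c$; since $d$ is square-free and $(a,b,c)=1$ this gives $v_p(d)=v_p(d_0)=1$ and $p\nmid b$, so $v_p(D)=v_p(b^2d-ac)=1$ and $v_p(dD/d_0^2)=0$, a contradiction; and if $p\nmid d$ then $p\nmid d_0$ because $d_0\mid d$. Consequently $v_p(d/d_0)=v_p(d)$ and $v_p(D/d_0)=v_p(D)$, so ``$p\mid(d/d_0,D/d_0)$'' means ``$p\mid d$ and $p\mid D$'', and since $p\mid dD/d_0^2$ forces $p\mid dD$, at least one of $p\mid d$, $p\mid D$ holds.

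Because $2,d_0\in\mathbb{Z}_p^\times$, I would subtract suitable scalars from the generators of Theorem \ref{thm2} to get a $\mathbb{Z}_p$-basis $1,e_1',e_2',e_3'$ of $M_p$ with the last three traceless: $e_1'=\tfrac{\alpha_1}{2}i$, $e_2'=\left(\tfrac{\beta}{2}+\tfrac{b}{d_0\alpha_1}\right)i+\tfrac{1}{d_0\alpha_1}j$, $e_3'=-\tfrac{b}{2d_0}i-\tfrac{1}{2d_0}j+\tfrac{1}{2d_0}ij$. With $\Lambda:=\mathbb{Z}_pe_1'\oplus\mathbb{Z}_pe_2'\oplus\mathbb{Z}_pe_3'$ the traceless part of $M_p$, the $\Delta$-values on $M_p$ are $\{-4\,\mathrm{nrd}(\lambda):\lambda\in\Lambda\}$ and the $\mathrm{nrd}$-values are $\{t^2+\mathrm{nrd}(\lambda):t\in\mathbb{Z}_p,\lambda\in\Lambda\}$. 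I would then split into the three cases above. Suppose first $p\nmid a$; then $v_p(\alpha_1)\le v_p(a)=0$, so all coordinates of $e_1',e_2',e_3'$ are $p$-integral. If $p\mid d$ and $p\nmid D$, then $\Delta(\lambda)\equiv 4Dy^2\pmod p$ for all $\lambda\in\Lambda$, so $\left(\tfrac{\Delta(\lambda)}{p}\right)\in\{0,\left(\tfrac Dp\right)\}$, the value $0$ being attained at $\lambda=0$ and $\left(\tfrac Dp\right)$ at $\lambda=e_2'$; hence $\left(\tfrac Mp\right)=\left(\tfrac Dp\right)=\left(\tfrac{-d}p\right)+\left(\tfrac Dp\right)$. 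If $p\nmid d$ and $p\mid D$ the same argument with $\Delta(\lambda)\equiv-4dx^2\pmod p$ and $\lambda=e_1'$ gives $\left(\tfrac Mp\right)=\left(\tfrac{-d}p\right)=\left(\tfrac{-d}p\right)+\left(\tfrac Dp\right)$. If $p\mid d$ and $p\mid D$, then $\mathrm{nrd}(\lambda)=dx^2-Dy^2-dDz^2\equiv 0\pmod p$ for all $\lambda\in\Lambda$, so $\left(\tfrac{\Delta(\lambda)}{p}\right)$ is always $0$ and $\left(\tfrac Mp\right)=0=\left(\tfrac{-d}p\right)+\left(\tfrac Dp\right)$.

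For the index, observe $\mathrm{nrd}(M_p^\times)\supseteq(\mathbb{Z}_p^\times)^2$ since $\mathrm{nrd}(u)=u^2$ for $u\in\mathbb{Z}_p^\times$, so the index is $1$ or $2$; for $p$ odd it is $2$ precisely when no unit of $M_p$ has non-square reduced norm. If $p\mid d$ and $p\mid D$, the first assertion gives $\left(\tfrac Mp\right)=0$, i.e. $\mathrm{trd}(\alpha)^2\equiv 4\,\mathrm{nrd}(\alpha)\pmod p$ for every $\alpha\in M_p$; a unit $\alpha$ then has $\mathrm{trd}(\alpha)\in\mathbb{Z}_p^\times$ and $\mathrm{nrd}(\alpha)\equiv(\mathrm{trd}(\alpha)/2)^2\pmod p$, hence $\mathrm{nrd}(\alpha)\in(\mathbb{Z}_p^\times)^2$, so the index is $2$. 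If instead exactly one of $p\mid d$, $p\mid D$ holds, then $\left(\tfrac Mp\right)=\pm1\neq0$, so some $\alpha\in M_p$ has $\Delta(\alpha)\in\mathbb{Z}_p^\times$; thus $\mathbb{Z}_p[\alpha]=\mathbb{Z}_p[x]/\bigl(x^2-\mathrm{trd}(\alpha)x+\mathrm{nrd}(\alpha)\bigr)$ is an \'etale quadratic $\mathbb{Z}_p$-subalgebra of $M_p$, isomorphic to $\mathbb{Z}_p\times\mathbb{Z}_p$ or to the ring of integers of the unramified quadratic extension of $\mathbb{Q}_p$, and $\mathrm{nrd}$ restricts to the algebra norm on it, which is surjective on units; hence $\mathrm{nrd}(M_p^\times)=\mathbb{Z}_p^\times$ and the index is $1$.

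The step I expect to be the main obstacle is the case $p\mid a$ in the first assertion, where the coordinates of $e_2'$ and $e_3'$ can fail to be $p$-integral. Here $p\mid a$ (together with $p\mid dD/d_0^2$) forces $p\nmid c$, and one must use the precise construction of the upper-triangular matrix $S=\left(\begin{smallmatrix}\alpha_1&\beta\\0&\alpha_2\end{smallmatrix}\right)$ in Theorem \ref{thm2}, together with the fact that $M$ is a $\mathbb{Z}$-order (so that $\mathrm{nrd}$, $\mathrm{trd}$, and hence $\Delta$, are $p$-integral on $M_p$), to pin down $v_p(\alpha_1)$ and $v_p(\beta)$ well enough to again produce $\lambda\in\Lambda$ with $\Delta(\lambda)\in\mathbb{Z}_p^\times$ when exactly one of $p\mid d$, $p\mid D$ holds, and to verify $\mathrm{nrd}(\Lambda)\subseteq p\mathbb{Z}_p$ when both hold. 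The rest is a routine, if lengthy, computation with the explicit basis.
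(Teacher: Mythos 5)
Your handling of the case $p\nmid\alpha_1$ is correct, and in fact it covers more ground than you claim: the only denominators appearing in the generators of $M$ are $2$, $d_0$ and $\alpha_1$, and since you correctly establish $p\nmid 2d_0$, your basis $1,e_1',e_2',e_3'$ is $p$-integral and $M_p=\mathbb{Z}_p[1,i,j,ij]$ whenever $p\nmid\alpha_1$, even if $p\mid a$. So the natural case split is on $p\mid\alpha_1$ rather than $p\mid a$, which is exactly the split the paper uses. Your argument for the index statement is also correct and is cleaner and more conceptual than the paper's: the paper simply reads off the possible values of $\mathrm{nrd}$ modulo $p$ from the explicit quadratic form, whereas you deduce index $2$ directly from $\left(\frac{M}{p}\right)=0$ and index $1$ from an embedded \'etale quadratic $\mathbb{Z}_p$-subalgebra whose norm is surjective on units.

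The genuine gap is the case $p\mid\alpha_1$, which you explicitly set aside as ``the main obstacle.'' This is not a routine leftover: it is the substantive content of the lemma and occupies most of the paper's proof, and your sketch (``pin down $v_p(\alpha_1)$ and $v_p(\beta)$'') does not identify the mechanism that actually makes it work. The paper argues as follows. If $p\mid\alpha_1$ then $p\mid a$, and one first checks $p\mid D/d_0$ (otherwise $p\mid d/d_0$, and then $p\mid b^2(d/d_0)-\alpha_1\alpha_2 c=D/d_0$ after all). The key step is then $p\mid\alpha_2$: the defining property of the matrix $S$ forces $a\mid bd\beta+\alpha_2 c$ (take $(m_0,l_0)=(0,1)$ in the parametrization), and if $p\nmid\alpha_2$ one deduces successively $p\mid c$, $p\nmid b$, $p\mid d$, hence $p\mid(a,d,c)=d_0$, a contradiction. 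With $p\mid\alpha_2$ in hand one writes $M_p=\mathbb{Z}_p\left[1,\ \alpha_1 i,\ d_0\beta i+\frac{2bi+2j}{\alpha_1},\ -bi-j+ij\right]$ and expands $\Delta$; the a priori non-integral cross terms involving $\frac{1}{\alpha_1}$ and $\frac{1}{\alpha_1^2}$ combine into expressions that are $p$-adic unit multiples of $\alpha_2\cdot\frac{bd\beta+\alpha_2 c}{a}$ and of $\alpha_2$, hence lie in $p\mathbb{Z}_p$, leaving $\Delta\equiv-4d(\alpha_1x+d_0\beta y)^2\pmod p$ and $\mathrm{nrd}\equiv t^2+d(\alpha_1x+d_0\beta y)^2\pmod p$, from which both assertions follow (here $\left(\frac{D}{p}\right)=0$). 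Without the divisibility $p\mid\alpha_2$ those cross terms do not obviously vanish, so your proposal as written does not close this case.
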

\begin{proof}

Assume for contradiction that $p|d_0$. Then because $d$ is square-free, we have $(p, d/d_0) = 1$, so $p|\frac{D}{d_0}= \frac{b^2d}{d_0} - \frac{ac}{d_0}$. From $d_0 | ac/d_0$, we see that $p|b$, contradicting the assumption that $(a,b,c) =1$. Therefore $p \nmid d_0$.

We  prove the lemma by considering two cases, when $p\nmid \alpha_1 $, and when $p | \alpha_1$. Firstly when $p \nmid \alpha_1$, we have
\[
M_p =\mathbb{Z}_p \left\lbrack 1, i, j, ij\right\rbrack,
\]
and so the lemma follows trivially by observing that
\[
\left\lbrack\mathbb{Z}_p^\times : \mathrm{nrd}\left(M_p^\times\right)\right\rbrack
\]
is $1$ or $2$ depending on whether $\mathrm{nrd}\left(M_p^\times\right) \subseteq \left(\mathbb{Z}_p^\times\right)^2$ has a non-square or not.

For the rest of the proof, we assume that $p|\alpha_1$. Assume for contradiction that $p \nmid \frac{D}{d_0}$. Then $p|\frac{d}{d_0}$, so $p|b^2\frac{ d}{d_0} - \alpha_1 \alpha_2 c = \frac{D}{d_0}$, which contradicts the assumption. Therefore $p|\frac{D}{d_0} = b^2\frac{ d}{d_0} - \alpha_1 \alpha_2 c$, which implies that $p|b\frac{d}{d_0}$.

We claim that $p|\alpha_2$. Otherwise, from
\[
a|bd\beta +\alpha_2 c,
\]
we have $p|c$. Because $(a,b,c)=1$, this implies that $p \nmid b$, so $p| \frac{d}{d_0}$. In particular, we have $p|d$, and from $p|a$, $p|c$, $p|(a,c,d) = d_0$. This is a contradiction because $(d_0,\frac{d}{d_0})=1$, proving our claim.

We now have
\[
M_p = \mathbb{Z}_p \left\lbrack 1, \alpha_1 i, d_0 \beta i + \frac{2bi +2j}{\alpha_1},-bi-j+ ij\right\rbrack,
\]
and so for $\alpha = t+ \alpha_1 i x + \left(d_0\beta i + \frac{2bi +2j}{\alpha_1}\right)y+(-bi-j+ ij)z$,
\begin{align*}
\Delta(\alpha) &= \mathrm{tr}(\alpha)^2 - 4\mathrm{nrd}(\alpha)\\
&= -4\left(d\left(\alpha_1 x+ d_0 \beta y - bz+\frac{2by}{\alpha_1}\right)^2 - D\left(\frac{2y}{\alpha_1}- z\right)^2 -dDz^2\right)\\
&=-4d\left(\alpha_1 x+ d_0 \beta y\right)^2 - 8bd(\alpha_1 x+ d_0\beta y)\left(\frac{2y}{\alpha_1}-z\right) -4 ac\left(\frac{2y}{\alpha_1}- z\right)^2 +4dDz^2\\
&\equiv -4d\left(\alpha_1 x+ d_0 \beta y\right)^2 - \frac{16bdd_0\beta y^2}{\alpha_1} -\frac{16acy^2}{\alpha_1^2} + \frac{16acyz}{\alpha_1} \pmod{p}.
\end{align*}
Now
\[
- \frac{16bdd_0\beta y^2}{\alpha_1} -\frac{16acy^2}{\alpha_1^2} = -16y^2d_0 \alpha_2\frac{ (bd\beta+\alpha_2 c)}{a}
\]
and
\[
\frac{16acyz}{\alpha_1} = 16\alpha_2 d_0 cyz
\]
are multiples of $p$, since $p|\alpha_2$. Therefore
\[
\left(\frac{M}{p}\right)= \left(\frac{-4d}{p}\right) = \left(\frac{-d}{p}\right)+\left(\frac{D}{p}\right).
\]

Likewise, we have
\[
\mathrm{nrd}(\alpha) \equiv t^2 +d(\alpha_1 x+ d_0\beta y)^2  \pmod{p},
\]
and therefore when $\mathbb{Z}_p^\times$ has a non-square unit if and only if $p|d$.
\end{proof}
\begin{lemma}\label{lem2}
Let
\[
M = \mathbb{Z}\left\lbrack 1, \frac{\alpha_1( 1+i)}{2} , \frac{\beta (i+1)}{2} + \frac{  bi+j}{d_0 \alpha_1 },\frac{-bd - bi -j+ij}{2d_0} \right\rbrack,
\]
as in Theorem \ref{thm2}. Assume that $\alpha_1$ is odd. When $p=2 | D$,
\[
\left(\frac{M}{p}\right) = (-1)^{(d^2-1)/8},
\]
and
\[
 \left\lbrack\mathbb{Z}_2^\times : \mathrm{nrd}\left(M_2^\times\right)\right\rbrack=1.
\]
\end{lemma}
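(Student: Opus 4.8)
The plan is to localize $M$ at the prime $2$, obtain an explicit description of $M_2$, and then run the discriminant-form argument exactly as in the proof of Lemma~\ref{lem1}, the one extra subtlety being that at $p=2$ the Kronecker symbol $\left(\tfrac{\cdot}{2}\right)$ depends on the residue modulo $8$, so the relevant quantities must be controlled mod $8$ rather than just mod $2$.

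\emph{Step 1: identifying $M_2$.} Since $d\equiv 3\pmod 4$ is odd and $d_0\mid d$ (recall $d_0=\gcd(a,bd,c)=\gcd(a,d,c)$), we have $2\nmid d_0$; together with the hypothesis that $\alpha_1$ is odd, this makes $\alpha_1$ and $d_0$ units in $\mathbb{Z}_2$. Put $\omega=\tfrac{1+i}{2}$, so that $\omega^2=\omega-\tfrac{d+1}{4}$ and $\mathbb{Z}[\omega]=O_d$ (this uses $d\equiv 3\pmod 4$), and note that $ji=-ij$ gives $j\omega=(1-\omega)j$. Clearing the unit denominators $\alpha_1,d_0$ in the four generators of $M$ and using the identities $\tfrac{d+i}{2}=\tfrac{d-1}{2}+\omega\in O_d$ and $\tfrac{(1-i)j}{2}=(1-\omega)j$, I would rewrite
\[
M_2=\mathbb{Z}_2\Bigl[\,1,\ \omega,\ bi+j,\ \tfrac{bd+bi+j-ij}{2}\,\Bigr]=(O_d)_2\oplus(O_d)_2\,j,
\]
where $j^2=D$. (As a consistency check, $(O_d)_2\oplus(O_d)_2 j$ has reduced discriminant $(dD)$, matching the $2$-part $2^{v_2(D)}$ of $\tfrac{dD}{d_0^2}$ recorded in Theorem~\ref{thm2}.)

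\emph{Step 2: the Eichler symbol.} Writing a general element of $M_2$ as $\alpha=u+vj$ with $u,v\in(O_d)_2$ and setting $K=\mathbb{Q}(\sqrt{-d})$ (with $\sqrt{-d}\leftrightarrow i$), one has $\mathrm{trd}(\alpha)=\mathrm{Tr}_{K/\mathbb{Q}}(u)$ and $\mathrm{nrd}(\alpha)=N_{K/\mathbb{Q}}(u)-D\,N_{K/\mathbb{Q}}(v)$, hence
\[
\Delta(\alpha)=(u-\bar u)^2+4D\,N_{K/\mathbb{Q}}(v).
\]
Substituting $u=m+n\omega$ with $m,n\in\mathbb{Z}_2$ gives $(u-\bar u)^2=(ni)^2=-dn^2$, while $4D\,N_{K/\mathbb{Q}}(v)\in 4D\mathbb{Z}_2$ has $2$-adic valuation $\ge 2+v_2(D)\ge 3$; therefore $\Delta(\alpha)\equiv -dn^2\pmod 8$. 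As $n^2\bmod 8\in\{0,1,4\}$ and $d$ is odd, the values of $\Delta(\alpha)\bmod 8$ lie in $\{0,\,4,\,-d\bmod 8\}$, the value $0$ being attained by $\alpha=1$ and the value $-d\bmod 8\in\{1,5\}$ by $\alpha=\omega$. Since $\left(\tfrac{0}{2}\right)=\left(\tfrac{4}{2}\right)=0$ and $\left(\tfrac{-d}{2}\right)=(-1)^{(d^2-1)/8}$, the $\Delta$-criterion from Section~2 yields $\left(\tfrac{M}{2}\right)=\left(\tfrac{-d}{2}\right)=(-1)^{(d^2-1)/8}$, consistent with the formula $\left(\tfrac{M}{p}\right)=\left(\tfrac{-d}{p}\right)+\left(\tfrac{D}{p}\right)$ of Lemma~\ref{lem1} since $\left(\tfrac{D}{2}\right)=0$.

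\emph{Step 3: the norm index.} Taking $v=0$, every $u\in(O_d)_2^\times\subset M_2^\times$ contributes $\mathrm{nrd}(u)=N_{K/\mathbb{Q}}(u)$, so $\mathrm{nrd}(M_2^\times)\supseteq N_{K\otimes\mathbb{Q}_2/\mathbb{Q}_2}\bigl((O_d)_2^\times\bigr)$. Because $-d\equiv 1\pmod 4$, the prime $2$ is unramified in $K$, so $K\otimes\mathbb{Q}_2$ is either $\mathbb{Q}_2\times\mathbb{Q}_2$ (if $d\equiv 7\pmod 8$) or the unramified quadratic extension of $\mathbb{Q}_2$ (if $d\equiv 3\pmod 8$); in both cases the norm of the unit group surjects onto $\mathbb{Z}_2^\times$. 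Hence $\mathrm{nrd}(M_2^\times)=\mathbb{Z}_2^\times$ and $\left[\mathbb{Z}_2^\times:\mathrm{nrd}(M_2^\times)\right]=1$.

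I expect the main obstacle to be Step~1: one must carefully track how the three nontrivial generators of $M$ interact once the units $\alpha_1,d_0$ are cleared, in order to be sure the localization is exactly $(O_d)_2\oplus(O_d)_2 j$ and nothing larger or smaller. After that the computation runs parallel to Lemma~\ref{lem1}, the only genuinely new point being that at $p=2$ one needs $\Delta(\alpha)$ modulo $8$ — which is precisely why it matters that the term $4D\,N_{K/\mathbb{Q}}(v)$ vanishes modulo $8$.
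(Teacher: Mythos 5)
Your proof is correct and, for the Eichler symbol, follows essentially the same route as the paper: both arguments reduce to showing $\Delta(\alpha)\equiv -d\,n^2\pmod 8$ on $M_2$ (the paper works in explicit coordinates $t,x,y,z$ and gets $\Delta\equiv -dx^2+D(1+d)z^2\equiv -dx^2\pmod 8$, which is the same computation), so that the Kronecker symbol takes only the values $0$ and $\left(\tfrac{-d}{2}\right)=(-1)^{(d^2-1)/8}$. Your Step 1 identification $M_2=(O_d)_2\oplus(O_d)_2\,j$ is a correct and slightly more structural restatement of the paper's $M_2=\mathbb{Z}_2\bigl[1,\tfrac{1+i}{2},j,\tfrac{-j+ij}{2}\bigr]$ (note $\tfrac{-j+ij}{2}=(\omega-1)j$). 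Where you genuinely diverge is the norm index: the paper exhibits the single element $\mathrm{nrd}(i)=d\equiv 3\pmod 4$ and concludes the index is $1$, which strictly speaking only shows the image contains a non-square, i.e.\ index at most $2$ in $\mathbb{Z}_2^\times$ modulo squares (recall $\mathbb{Z}_2^\times/(\mathbb{Z}_2^\times)^2$ has order $4$); your argument via surjectivity of the norm on units for the unramified (or split) local extension $K\otimes\mathbb{Q}_2/\mathbb{Q}_2$ gives $\mathrm{nrd}(M_2^\times)=\mathbb{Z}_2^\times$ outright and is the cleaner and more complete justification of that step.
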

\begin{proof}
Because $\alpha_1$ is odd, we have
\[
M_2 = \mathbb{Z}_2 \left\lbrack 1, \frac{1+i}{2}, j, \frac{-j+ij}{2}\right\rbrack,
\]
and so for $\alpha = t+\frac{1+i}{2}x + jy + \frac{-j+ij}{2}z$,
\begin{align*}
\Delta(\alpha) &= -4 \left(d\left(\frac{x}{2}\right)^2- D\left(y-\frac{z}{2}\right)^2 - dD\left(\frac{z}{2}\right)^2\right)\\
&= - dx^2 +D(2y-z)^2 +dDz^2\\
&\equiv -dx^2 + D(1+d)z^2 \pmod{8}.
\end{align*}
From the assumption that $d \equiv 3 \pmod{4}$, we see that this is square if $d \equiv 7 \pmod{8}$, and non-square if $d \equiv 3 \pmod{8}$. Now because
\[
\mathrm{nrd}(-1+\frac{1+i}{2} *2) = d \equiv 3 \pmod{4},
\]
we have
\[
 \left\lbrack\mathbb{Z}_2^\times : \mathrm{nrd}\left(M_2^\times\right)\right\rbrack=1. \qedhere
\]
\end{proof}
We are now ready to compute the area of the immersed totally geodesic surface corresponding to $\sigma_r^{-1} \mathcal{C}_{m,c}$.
\begin{theorem}
For $0\leq m <d$, and any $c \in \mathbb{Z}$ such that $m^2>cd$, let $d_0 = d/(m,d)$ and $D= (m^2d-cd^2)/(m,d)^2$. Then the immersed totally geodesic surface $\mathcal{S}_{m,c,r}$ corresponding to $\sigma_r^{-1} \mathcal{C}_{m,c}$ satisfies
\[
vol \left(\mathcal{S}_{m,c,r}\right) = \frac{d}{d_0^2}\frac{\pi 2^{-\omega\left(\gcd\left(d/d_0,D\right)\right)}}{3}\prod_{p|\frac{d}{d_0}}\frac{1-p^{-2}}{1-\left(\frac{D}{p}\right)p^{-1}} D\prod_{\substack{p|D\\p\nmid d}} \left(1+\chi_{-d}(p)p^{-1}\right)
\]
where $\omega(n)$ is the number of distinct prime factors of $n$, and $\chi_{-d}$ is the quadratic Dirichlet character associated to $\mathbb{Q}[\sqrt{-d}]$.
\end{theorem}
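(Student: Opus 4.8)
The plan is to realize the stabilizer $F_{\mathcal{S}_{m,c,r}}$ as a group of norm-one units in a $\mathbb{Z}$-order by means of Theorem \ref{thm2}, and then to evaluate the volume formula of Theorem \ref{thm1}, computing the local factors $\lambda(M,p)$ and the indices $[\mathbb{Z}_p^\times:\mathrm{nrd}(M_p^\times)]$ from Lemmas \ref{lem1} and \ref{lem2}.

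First I would make the curve $\sigma_r^{-1}\mathcal{C}_{m,c}$ explicit. If $H=\begin{pmatrix} d & m\sqrt{-d}\\ -m\sqrt{-d} & dc\end{pmatrix}$ denotes the Hermitian matrix of $\mathcal{C}_{m,c}$, then $\sigma_r^{-1}\mathcal{C}_{m,c}$ is cut out by $\sigma_r^{*}H\sigma_r$, which a direct computation shows to be of the shape $\begin{pmatrix} a' & b'\sqrt{-d}\\ -b'\sqrt{-d} & c'\end{pmatrix}$ with $a',b',c'\in\mathbb{Z}$ and $\gcd(a',b',c')=r(m,d)$. Dividing by $r(m,d)$ produces a primitive integral form $a|z|^2+2\mathrm{Re}(b\sqrt{-d}\bar z)+c_{*}=0$ to which Theorem \ref{thm2} applies. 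Since the discriminant of a binary Hermitian form scales by $|\det\sigma_r|^2=r^2$ under the action of $\sigma_r$, its discriminant $b^2d-ac_{*}$ equals $r^2(m^2d-cd^2)/(r(m,d))^2=(m^2d-cd^2)/(m,d)^2=D$, and a parallel computation gives $\gcd(a,bd,c_{*})=d/(m,d)=d_0$; in particular the normal-form data does not depend on $r$. I would also arrange $\alpha_1$ to be odd, so that Lemma \ref{lem2} is available at $p=2$, by taking the auxiliary integers $u,v$ defining $\sigma_r$ with $v$ even (possible since $d/r$ is odd, and changing the resulting surface only within its isometry class). Theorem \ref{thm2} then identifies $F_{\mathcal{S}_{m,c,r}}$ with $P_{\rho'}(M^1)$, where $M$ has reduced discriminant $dD/d_0^2$; using $d_0\mid D$ and $(m,d)^2\mid m^2$ one checks $dD/d_0^2=m^2-cd$, which is positive by the hypothesis $m^2>cd$, so the quaternion algebra $\left(\frac{-d,D}{\mathbb{Q}}\right)$ is indefinite and Theorem \ref{thm1} applies, giving
\[
vol(\mathcal{S}_{m,c,r})=\frac{\pi}{3}\,(m^2-cd)\prod_{p\mid m^2-cd}\lambda(M,p) \,/\, \prod_{p\mid m^2-cd}\bigl[\mathbb{Z}_p^\times:\mathrm{nrd}(M_p^\times)\bigr].
\]

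It then remains to work out this finite product, which is bookkeeping. Writing $m^2-cd=dD/d_0^2=(d/d_0)(D/d_0)$ and recalling from the proof of Lemma \ref{lem1} that no prime dividing $m^2-cd$ divides $d_0$, I would sort the primes $p\mid m^2-cd$ into: type (A), $p\mid d/d_0$ and $p\mid D$, equivalently $p\mid\gcd(d/d_0,D)$; type (B), $p\mid d/d_0$ and $p\nmid D$; and type (C), $p\mid D$ and $p\nmid d$ (the prime $p=2$ always being of type (C), as $d$ is odd). Lemma \ref{lem1} for odd $p$ and Lemma \ref{lem2} for $p=2$ (where one checks $(-1)^{(d^2-1)/8}=\chi_{-d}(2)$) then give: $\left(\frac{M}{p}\right)=0$, $\lambda(M,p)=1-p^{-2}$, index $2$ in case (A); $\left(\frac{M}{p}\right)=\left(\frac{D}{p}\right)=\pm1$, $\lambda(M,p)=\frac{1-p^{-2}}{1-(D/p)p^{-1}}$, index $1$ in case (B); and $\left(\frac{M}{p}\right)=\chi_{-d}(p)$, $\lambda(M,p)=\frac{1-p^{-2}}{1-\chi_{-d}(p)p^{-1}}=1+\chi_{-d}(p)p^{-1}$, index $1$ in case (C). Only type (A) primes contribute a factor $2$ to the denominator, so it equals $2^{\omega(\gcd(d/d_0,D))}$; since $\left(\frac{D}{p}\right)=0$ whenever $p\mid D$, the type (A) and (B) factors combine into $\prod_{p\mid d/d_0}\frac{1-p^{-2}}{1-(D/p)p^{-1}}$; and the type (C) factors give $\prod_{p\mid D,\,p\nmid d}(1+\chi_{-d}(p)p^{-1})$. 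Substituting these, together with $\frac{\pi}{3}(m^2-cd)=\frac{d}{d_0^2}\cdot\frac{\pi}{3}D$, into the displayed identity yields the stated formula.

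The hard part is the reduction step of the second paragraph: verifying that the content of $\sigma_r^{*}H\sigma_r$ is exactly $r(m,d)$ and that the resulting primitive form carries the invariants $d_0=d/(m,d)$ and $D=(m^2d-cd^2)/(m,d)^2$, since this is precisely what makes the parametrization $r$-independent and reduces everything to the already-established Lemmas \ref{lem1} and \ref{lem2}. The local computation is routine by comparison, once one observes the identity $\frac{1-p^{-2}}{1-\chi_{-d}(p)p^{-1}}=1+\chi_{-d}(p)p^{-1}$.
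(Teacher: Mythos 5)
Your outline follows the same route as the paper: pull back $\mathcal{C}_{m,c}$ by $\sigma_r$, normalize to a primitive form with the invariants $d_0$ and $D$, feed it into Theorem \ref{thm2}, and evaluate Theorem \ref{thm1} using Lemmas \ref{lem1} and \ref{lem2}. The local bookkeeping in your third paragraph is correct and matches the paper's (your types (A), (B), (C) account for exactly the primes dividing the reduced discriminant $dD/d_0^2=m^2-cd=(d/d_0)(D/d_0)$, the only primes contributing index $2$ are those dividing $\gcd(d/d_0,D)$, and $\frac{\pi}{3}(m^2-cd)=\frac{d}{d_0^2}\frac{\pi}{3}D$).

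The gap is precisely the step you flag as ``the hard part'' and then dispose of with ``a direct computation shows'' and ``a parallel computation gives'': that the content of $\sigma_r^{*}H\sigma_r$ is \emph{exactly} $r(m,d)$ (so that the quotient form is primitive, as Theorem \ref{thm2} requires $(a,b,c_*)=1$), and that the quotient form satisfies $\gcd(a,d,c_*)=d/(m,d)=d_0$. Neither is routine, and the paper's proof consists almost entirely of these two verifications. For primitivity one argues: any prime $p$ dividing all coefficients of the quotient form must divide its discriminant $D$; a valuation count (using that $d$ is square-free, so $p\mid d$ would force $p\mid m$ and $p^2\,\|\,dm^2-cd^2$) rules out $p\mid d$; then $m^2\equiv cd\pmod p$ lets one rewrite the two outer coefficients, up to units mod $p$, as $r(mv+s)^2$ and $s(mu+r)^2$, whence $p\mid (mv+s)u-(mu+r)v=1$, a contradiction. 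The computation of $\gcd(a,d,c_*)$ similarly reduces to showing $\gcd\bigl(s+2mv+crv^2,\ (m,d),\ csu^2+2mu+r\bigr)=1$ using $d=rs$ square-free. Without these arguments your proof asserts, rather than establishes, that the normal-form data $(d_0,D)$ is as claimed and is independent of $r$, which is the content of the theorem. One further small point: you should justify that choosing $v$ even (so that $\alpha_1$ is odd, making Lemma \ref{lem2} applicable) is always possible; this works because $s=d/r$ is odd, so $su-rv=1$ has a solution with $v$ even.
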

\begin{remark}
Note that we have
\[
\chi_{-d}(p) = \left(\frac{-d}{p}\right)
\]
for an odd prime $p$, and that
\[
\chi_{-d}(2) = (-1)^{(d^2-1)/8}.
\]
\end{remark}
\begin{proof}
Let $d=rs$. From direct computation, we see that $\sigma_r^{-1} \mathcal{C}_{m,c}$ satisfies
\[
d(d+2mrv+crv^2)|z|^2 +2\mathrm{Re}\left(-\sqrt{-d}\left(mr^2v+dmu+dr+cdruv\right)\bar{z}\right)+d(cdu^2+2mru+r^2)=0
\]
where $u,v$ are chosen such that
\[
su- rv = 1.
\]
Note that we can choose $v$ to be an even integer, so that $d(d+2mv+crv^2)$ is odd, hence $\alpha_1$ is odd. This equation is equivalent to
\begin{multline*}
\frac{d}{(m,d)}(s+2mv+crv^2)|z|^2 +2\mathrm{Re}\left(-\sqrt{-d}\left(\frac{m}{(m,d)}(rv+su)+\frac{d}{(m,d)}(1+cuv)\right)\bar{z}\right)\\
+\frac{d}{(m,d)}(csu^2+2 m u+r)=0.
\end{multline*}
We claim that
\begin{equation}\label{eq1}
\gcd\left(\frac{d}{(m,d)}(s+2mv+crv^2),\frac{m}{(m,d)}(rv+su)+\frac{d}{(m,d)}(1+cuv), \frac{d}{(m,d)}(csu^2+2 m u+r) \right) = 1
\end{equation}
and that
\begin{equation}\label{eq2}
\gcd\left(\frac{d}{(m,d)}(s+2mv+crv^2),d, \frac{d}{(m,d)}(csu^2+2 m u+r) \right) = \frac{d}{(m,d)}.
\end{equation}
To prove \eqref{eq1}, we first note that
\[
d\left(\frac{m}{(m,d)}(rv+su)+\frac{d}{(m,d)}(1+cuv)\right)^2 - \frac{d^2}{(m,d)^2}(s+2mv+cv^2)(csu^2+2 m u+r) = \frac{dm^2-cd^2}{(m,d)^2},
\]
and so any prime $p$ dividing the left hand side of \eqref{eq1} must divide $\frac{dm^2-cd^2}{(m,d)^2}$. Assume for contradiction that $p|d$. Then we have $p|m$, and so $p^2\|dm^2 - cd^2$. Because $p^2 \| (m,d)^2$, this is contradiction to the assumption that $p|\frac{dm^2-cd^2}{(m,d)^2}$. Therefore $p\nmid d$, and we have $m^2  \equiv cd \pmod{p} $.

Now we have
\[
d(s+2mv+crv^2) \equiv r(s^2 + 2msv+m^2v^2) \equiv r(mv+s)^2 \pmod{p}
\]
and likewise
\[
d(csu^2+2 m u+r) \equiv s( mu+r)^2 \pmod{p}.
\]
This implies that $p|(mv+s)u-(mu+r)v = 1$, and we get contradiction. This proves \eqref{eq1}.

To prove \eqref{eq2}, we first note that the left hand side of \eqref{eq2} is equal to
\[
\frac{d}{(m,d)}\gcd\left(s+2mv+crv^2,(m,d), csu^2+2 m u+r \right).
\]
If $p|\gcd\left(s+2mv+crv^2,(m,d), csu^2+2 m u+r \right)$, then because $p|d=rs$, either $p|r$ or $p|s$ should be satisfied. If $p|r$, then $p|s+2mv+crv^2$ implies $p|s$, contradicting the assumption that $d$ is square-free. If $p|s$, then $p|csu^2+2m u+r$ implies $p|r$, again contradicting the assumption that $d$ is square-free. Therefore $\gcd\left(s+2mv+crv^2,(m,d), csu^2+2 m u+r \right)=1$, proving \eqref{eq2}.

Now the theorem follows from the definition of $d_0$, $D$, and applying Theorem \ref{thm1} to the integral order computed in Theorem \ref{thm2}. Lemma \ref{lem1} and Lemma \ref{lem2} amounts to the local computation that is required in the formula given in Theorem \ref{thm1}.
\end{proof}
\subsection{Counting the surfaces when sorted by the area}
We first need an analytic lemma for counting the number of values of an arithmetic function less than $X$, as $X \to \infty$.
\begin{lemma}\label{lemmain}
Let $a$ be an integer such that,
\[
p|a \Rightarrow p|d.
\]
For any integer $r$, we have
\[
\#\{n\equiv r \pmod{a}~:~ n\prod_{p|n} \left(1+\chi_{-d}(p)p^{-1} \right)<X\} = \frac{C}{a} X + o(X)
\]
where
\[
C=\prod_{p} (1-p^{-1}+(p+\chi_{-d}(p))^{-1}).
\]
\end{lemma}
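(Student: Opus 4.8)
The plan is to extract the asymptotics from a Tauberian theorem applied to the Dirichlet series generating the counting function. Write $f(n)=n\prod_{p\mid n}\bigl(1+\chi_{-d}(p)p^{-1}\bigr)$ for the arithmetic function whose sublevel sets are to be counted; it is multiplicative and integer-valued, with $f(1)=1$ and $f(p^{k})=p^{k-1}\bigl(p+\chi_{-d}(p)\bigr)$ for $k\ge 1$. Since $\chi_{-d}$ has conductor $d$ and $p\mid a\Rightarrow p\mid d$, we have $\chi_{-d}(p)=0$ — hence $f(p^{k})=p^{k}$ — at every prime dividing $a$, and this is what will make the congruence condition tractable. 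Setting $b_{m}=\#\{n\equiv r\pmod a:\ f(n)=m\}\ge 0$, the quantity to estimate is $\sum_{m<X}b_{m}$, generated by
\[
F_{r}(s)=\sum_{\substack{n\ge 1\\ n\equiv r\ (a)}}\frac{1}{f(n)^{s}}=\sum_{m\ge 1}\frac{b_{m}}{m^{s}}.
\]
By the Wiener--Ikehara theorem it suffices to show that $F_{r}$ converges for $\mathrm{Re}(s)>1$ and that $F_{r}(s)-\frac{C/a}{s-1}$ continues holomorphically to a neighbourhood of $\{\mathrm{Re}(s)\ge 1\}$.

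First I would analyse the diagonal series, which also pins down $C$. Multiplicativity of $f$ gives the Euler product
\[
\sum_{n\ge 1}\frac{1}{f(n)^{s}}=\prod_{p}\Bigl(1+\frac{(p+\chi_{-d}(p))^{-s}}{1-p^{-s}}\Bigr)=\zeta(s)\,G(s),\qquad G(s)=\prod_{p}\Bigl((1-p^{-s})+(p+\chi_{-d}(p))^{-s}\Bigr),
\]
the $p\mid d$ factor of $G$ being simply $1$. Writing $(p+\chi_{-d}(p))^{-s}=p^{-s}\bigl(1+O_{s}(p^{-1})\bigr)$ shows each Euler factor of $G$ to be $1+O_{s}\bigl(p^{-1-\mathrm{Re}(s)}\bigr)$, so $G$ is holomorphic and non-vanishing on $\mathrm{Re}(s)>0$ with $G(1)=\prod_{p}\bigl((1-p^{-1})+(p+\chi_{-d}(p))^{-1}\bigr)=C$; a short sign check (cases $\chi_{-d}(p)=0,\pm1$, and $p=2$) shows $C>0$. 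Comparing Dirichlet series with non-negative coefficients, $F_{r}$ converges for $\mathrm{Re}(s)>1$, and the case $a=1$ is already finished since $\zeta(s)G(s)-C/(s-1)$ is holomorphic near $\mathrm{Re}(s)=1$.

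Next, for general $a$, let $P$ be the set of primes dividing $a$ and factor $n=uv$ with $u$ supported on $P$ and $\gcd(v,a)=1$; since $\chi_{-d}$ vanishes on $P$ this gives $f(u)=u$ and $f(n)=u\,f(v)$. For fixed $u$ the condition $n\equiv r\pmod a$ reads $uv\equiv r\pmod a$, confining $v\bmod a$ to a subset $W_{u}\subseteq(\mathbb Z/a\mathbb Z)^{\times}$; detecting each admissible residue with Dirichlet characters modulo $a$ yields
\[
F_{r}(s)=\frac{1}{\varphi(a)}\sum_{\psi\bmod a}H_{\psi}(s)\,L(s,\psi)\,G_{\psi}(s),\qquad H_{\psi}(s)=\sum_{u\mid P^{\infty}}\frac{1}{u^{s}}\sum_{w\in W_{u}}\overline{\psi}(w),
\]
where $L(s,\psi)=\prod_{p\nmid a}(1-\psi(p)p^{-s})^{-1}$ is the Dirichlet $L$-function, $G_{\psi}(s)=\prod_{p\nmid d}\bigl((1-\psi(p)p^{-s})+\psi(p)(p+\chi_{-d}(p))^{-s}\bigr)$ is holomorphic on $\mathrm{Re}(s)>0$ by the estimate above, and $H_{\psi}$ converges absolutely there because $\sum_{u\mid P^{\infty}}u^{-\sigma}=\prod_{p\in P}(1-p^{-\sigma})^{-1}<\infty$ for $\sigma>0$. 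For non-principal $\psi$, $L(s,\psi)$ is entire, so those summands are harmless near $\mathrm{Re}(s)=1$; the principal character gives $L(s,\psi_{0})=\zeta(s)\prod_{p\mid a}(1-p^{-s})$, so $F_{r}$ has on $\{\mathrm{Re}(s)\ge 1\}$ a single simple pole, at $s=1$, with residue $\frac{1}{\varphi(a)}H_{\psi_{0}}(1)\cdot\frac{\varphi(a)}{a}\cdot G_{\psi_{0}}(1)=\frac{C}{a}H_{\psi_{0}}(1)$, using $G_{\psi_{0}}(1)=\prod_{p\nmid d}\bigl((1-p^{-1})+(p+\chi_{-d}(p))^{-1}\bigr)=C$. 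Finally $H_{\psi_{0}}(1)=\sum_{u\mid P^{\infty}}\#W_{u}/u=1$, since every $n\equiv r\pmod a$ factors uniquely as $uv$ with $u\mid P^{\infty}$, $\gcd(v,a)=1$ and $v\bmod a\in W_{u}$, so counting $\{n\le Y:\ n\equiv r\ (a)\}$ both directly ($\sim Y/a$) and as $\sum_{u\mid P^{\infty}}\#\{v\le Y/u:\ v\bmod a\in W_{u}\}\sim(Y/a)\sum_{u\mid P^{\infty}}\#W_{u}/u$ forces that sum to be $1$. Wiener--Ikehara now gives $\sum_{m<X}b_{m}=\frac{C}{a}X+o(X)$.

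The substantive content is the interplay of two facts: that $\chi_{-d}$ vanishes on the primes of $a$, which is exactly what lets the $P$-part $u$ of $n$ factor out of the character sum as a bare $\sum u^{-s}$; and that $G_{\psi}$ and $H_{\psi}$ are genuinely holomorphic on $\mathrm{Re}(s)>0$, which means the continuation of $F_{r}$ past $\mathrm{Re}(s)=1$ needs nothing about zeros of $L(s,\psi)$ on the line and follows solely from the classical continuation of Dirichlet $L$-functions. I expect the only delicate point to be making the residue land exactly on $C/a$ — that is, verifying the identities $G_{\psi_{0}}(1)=C$ and $\sum_{u\mid P^{\infty}}\#W_{u}/u=1$; the description of the sets $W_{u}$ and the density identity will be the fiddliest part of the write-up, though entirely elementary.
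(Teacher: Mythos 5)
Your proof is correct and follows essentially the same route as the paper: expand the generating Dirichlet series into an Euler product, compare it with Dirichlet $L$-functions modulo $a$, check that the quotient is holomorphic on $\mathrm{Re}(s)>0$ so that only the principal character contributes a simple pole at $s=1$, and apply Wiener--Ikehara. The one place you genuinely diverge is the treatment of $\gcd(a,r)>1$: the paper disposes of it in one line via the identity $F(an+r)=(a,r)\,F\bigl(\tfrac{a}{(a,r)}n+\tfrac{r}{(a,r)}\bigr)$ (valid precisely because $\chi_{-d}$ vanishes on primes dividing $a$), reducing to the coprime case, whereas you keep $r$ general, split off the $a$-smooth part $u$ of $n$, and close the residue computation with the density identity $\sum_{u}\#W_u/u=1$. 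Both are valid; the paper's reduction is shorter and avoids the sets $W_u$ and the series $H_\psi$ altogether, while your version keeps the hypothesis $p\mid a\Rightarrow p\mid d$ visible inside the Euler-product bookkeeping rather than burying it in a preprocessing step.
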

\begin{proof}
For simplicity, let $F(n) = n\prod_{p|n} \left(1+\chi_{-d}(p)p^{-1} \right)$. Note that we have
\[
F(an+r) = (a,r)F\left(\frac{a}{(a,r)}n+\frac{r}{(a,r)}\right)
\]
from the assumption on $a$, hence we may assume without loss of generality that $(a,r)=1$.

Consider Dirichlet characters $\psi_0, \ldots, \psi_{\phi(a)-1}$ modulo $a$, where we set $\psi_0$ to be the principal character. Let
\[
D_F(s,\psi_j) =  \sum_{n=1}^\infty \psi_j (n) F(n)^{-s} = \prod_{p} (1+\psi_j(p) F(p)^{-s}+\psi_j(p^2) F(p^2)^{-s}+\psi_j(p^3) F(p^3)^{-s}\ldots).
\]
Then we have for $s=\sigma+it$,
\begin{align*}
D_F(s,\psi_j) / L(s,\psi_j) &= \prod_{p} (1+\psi_j(p) F(p)^{-s}+\psi_j(p^2) F(p^2)^{-s}+\psi_j(p^3) F(p^3)^{-s}\ldots)(1-\psi_j(p)p^{-s})\\
&= \prod_{p} (1+(1+\chi_{-d}(p)p^{-1})^{-s}\psi_j(p) p^{-s}(1-\psi_j(p)p^{-s})^{-1})(1-\psi_j(p)p^{-s})\\
&=\prod_{p} (1-\psi_j(p)p^{-s}+(1+\chi_{-d}(p)p^{-1})^{-s}\psi_j(p) p^{-s})\\
&=\prod_{p} (1+O((1+|t|)p^{-1-\sigma})),
\end{align*}
where $L(s,\psi_j)$ is the Dirichlet $L$-function associated to the character $\psi_j$. This in particular implies that all $D_F(s,\psi_j)$ are holomorphic in the region $\sigma >0$, except for $D_F(s,\psi_0)$ which has a simple pole at $s=1$, where the residue is given by
\[
\lim_{s \to 1} D_F(s,\psi_0)/\zeta(s) = \prod_{p} (1-p^{-1}+(1+\chi_{-d}(p)p^{-1})^{-1} p^{-1})\prod_{p|a}(1-p^{-1}) = \frac{\phi(a)C}{a}.
\]

Now consider a Dirichlet series that converges absolutely for $\mathrm{Re}(s)>1$,
\[
\sum_{m=1}^\infty \frac{\#\{n\equiv r \pmod{a}~:~F(n)=m\}}{m^s}=\sum_{n\equiv r \pmod{a}} F(n)^{-s} =\frac{1}{\phi(a)}\sum_j D_F(s,\psi_j) \overline{\psi_j(r)}.
\]
From the computation above,
\[
\sum_{m=1}^\infty \frac{\#\{n\equiv r \pmod{a}~:~F(n)=m\}}{m^s} - \frac{C}{a(s-1)}
\]
is holomorphic in the region $\mathrm{Re}(s)>0$, and so by Wiener--Ikehara theorem (see for instance, Corollary 8.8 \cite{MR2378655}) we have
\[
\#\{n\equiv r \pmod{a}~:~F(n)\leq x\}= \sum_{m\leq x}\#\{n\equiv r \pmod{a}~:~F(n)=m\} = \frac{C}{a}x+o(x),
\]
as $x\to \infty$.
\end{proof}
With this lemma, we can count the number of totally geodesic surfaces when ordered by the area.
\begin{theorem}
With the same $C$ defined in Lemma \ref{lemmain}, we have
\[
\#\{(m,c,r):vol \left(\mathcal{S}_{m,c,r}\right) < X\} = \frac{3C\tau(d)}{2\pi }\left(\prod_{p|d} (1+p^{-2}+p^{-3}+p^{-4}+\ldots)\right) X + o(X)
\]
as $X \to \infty$.
\end{theorem}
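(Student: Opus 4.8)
The plan is to reduce the count to finitely many applications of Lemma \ref{lemmain}. First I would note that the volume formula of the preceding theorem does not involve $r$ at all, and that $r$ runs over the square-free divisors of $d$ with $r<\sqrt d$; since $d$ is square-free and, being $\equiv 3\pmod 4$, is not a perfect square, there are exactly $\tau(d)/2$ such $r$. Hence $\#\{(m,c,r):vol(\mathcal{S}_{m,c,r})<X\}=\tfrac{\tau(d)}{2}\#\{(m,c):vol(\mathcal{S}_{m,c})<X\}$, and everything comes down to counting the pairs $(m,c)$.

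Next, for fixed $m$ with $0\le m<d$, I would write $g=(m,d)$, $d_0=d/g$, $m'=m/g$, so that $(m',d_0)=1$ and $D=d_0 n$ with $n=gm'^2-cd_0$; as $c$ runs over the integers with $m^2>cd$, the parameter $n$ runs over exactly the positive integers in the residue class $gm'^2\bmod d_0$. Substituting into the volume formula, using $\chi_{-d}(p)=0$ for every $p\mid d$ and that $d$ is square-free, the formula should collapse to $vol(\mathcal{S}_{m,c})=K_m(n)\,F(n)$, where $F(n)=n\prod_{p\mid n}(1+\chi_{-d}(p)p^{-1})$ is the arithmetic function appearing in Lemma \ref{lemmain} and
\[
K_m(n)=\frac{\pi g}{3}\,2^{-\omega(\gcd(g,n))}\prod_{p\mid g}\frac{1-p^{-2}}{1-\bigl(\tfrac{d_0 n}{p}\bigr)p^{-1}}.
\]
The key point is that $K_m(n)$ depends on $n$ only through $n\bmod g$: both $\gcd(g,n)$ and the Legendre symbols $\bigl(\tfrac{d_0 n}{p}\bigr)$ for $p\mid g$ (odd, and $(g,d_0)=1$) are determined by $n\bmod g$. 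Refining the class $n\equiv gm'^2\pmod{d_0}$ into the $g$ classes modulo $d$ obtained by prescribing $n\bmod g$ freely, $K_m$ is constant on each such class, so Lemma \ref{lemmain} with modulus $a=d$ contributes $\tfrac{C}{d\,K_m}X+o(X)$ from each; summing over the $g$ classes gives $\#\{(m,c)\text{ with given }g,m':vol<X\}=\tfrac{C}{d}\bigl(\sum_{\nu\bmod g}K_m(\nu)^{-1}\bigr)X+o(X)$.

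It then remains to evaluate the constants and sum over $m$. Since $g$ is square-free, $\sum_{\nu\bmod g}K_m(\nu)^{-1}=\tfrac{3}{\pi g}\sum_{\nu\bmod g}2^{\omega(\gcd(g,\nu))}\prod_{p\mid g}\tfrac{1-\bigl(\tfrac{d_0\nu}{p}\bigr)p^{-1}}{1-p^{-2}}$ factors over the primes $p\mid g$, the local factor being $\tfrac{2}{1-p^{-2}}$ from $\nu\equiv 0$ plus $\tfrac{p-1}{1-p^{-2}}$ from $\nu\not\equiv 0$ (using $\sum_{\nu\not\equiv 0}\bigl(\tfrac{d_0\nu}{p}\bigr)=0$), hence equal to $\tfrac{p^2}{p-1}$; thus $\sum_{\nu\bmod g}K_m(\nu)^{-1}=\tfrac{3}{\pi}\prod_{p\mid g}\tfrac{p}{p-1}$, independent of $m'$. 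There are $\phi(d/g)$ admissible values of $m'$ for each $g\mid d$, so
\[
\#\{(m,c):vol<X\}=\frac{3C}{\pi d}\Bigl(\sum_{g\mid d}\phi(d/g)\prod_{p\mid g}\tfrac{p}{p-1}\Bigr)X+o(X),
\]
and by multiplicativity ($d$ square-free) the bracketed sum is $\prod_{p\mid d}\bigl((p-1)+\tfrac{p}{p-1}\bigr)=\prod_{p\mid d}\tfrac{p^2-p+1}{p-1}$. Since $\tfrac{p^2-p+1}{p(p-1)}=1+\tfrac{p^{-2}}{1-p^{-1}}=1+p^{-2}+p^{-3}+\cdots$, dividing by $d$ and multiplying by $\tau(d)/2$ gives the claimed asymptotic.

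There is no essential difficulty beyond bookkeeping. The step where I would be most careful is the simplification of the volume formula: one must check that $2^{-\omega(\gcd(d/d_0,D))}=2^{-\omega(\gcd(g,n))}$ (since $(g,d_0)=1$), and that $\prod_{p\mid D,\,p\nmid d}(1+\chi_{-d}(p)p^{-1})=\prod_{p\mid n}(1+\chi_{-d}(p)p^{-1})$ because $\chi_{-d}$ vanishes at primes dividing $d$ and $d_0\mid d$. One must also verify that only finitely many residue classes are being combined — a number depending only on $d$ — so the error terms from Lemma \ref{lemmain} still sum to $o(X)$. The remaining computations (the local sum equalling $\tfrac{p^2}{p-1}$, the final Euler-factor identity) are routine.
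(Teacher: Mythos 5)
Your proposal is correct and follows essentially the same route as the paper: reduce the volume to a residue-class-dependent constant times $F$ evaluated along arithmetic progressions with modulus supported on the primes dividing $d$, apply Lemma \ref{lemmain} to each of the finitely many classes, and evaluate the resulting local sums multiplicatively before summing over $m$ and multiplying by $\tau(d)/2$ for $r$. The only difference is cosmetic bookkeeping (you work with $n=D/d_0$ modulo $d$, while the paper splits $c$ modulo $d/d_0$ and applies the lemma to $D$ itself), and all your intermediate constants agree with the paper's.
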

\begin{proof}
Let $c=d/d_0 k + \kappa$ with $\kappa=0,\ldots, d/d_0 - 1$. Then
\[
\gcd(d/d_0, D) = \gcd (d/d_0, \kappa d_0^2) = \gcd (d/d_0, \kappa)
\]
and for $p|d/d_0$,
\[
\left(\frac{D}{p}\right) = \left(\frac{-\kappa d_0^2}{p}\right) = \left(\frac{-\kappa }{p}\right).
\]
Therefore for each fixed $\kappa$, $r$, and $m$,
\begin{align*}
&\#\{k:vol(\mathcal{S}_{m,d/d_0k +\kappa ,r})<X\} \\
=&\#\left\{k:\frac{d\pi 2^{-\omega(\gcd(d/d_0, \kappa))}}{3d_0^2}\prod_{p|\frac{d}{d_0}}\frac{1-p^{-2}}{1-\left(\frac{-\kappa}{p}\right)p^{-1}}F\left(-d_0dk-d_0^2\kappa +\frac{m^2 d}{(m,d)^2}\right)<X\right\}\\
=& \frac{3Cd_0}{\pi d^2} 2^{\omega\left(\gcd\left(d/d_0,\kappa\right)\right)} \prod_{p|\frac{d}{d_0}}\frac{1-\left(\frac{-\kappa}{p}\right)p^{-1}}{1-p^{-2}} X + o(X),
\end{align*}
by Lemma \ref{lemmain}.

We first sum the part depending on $\kappa$ over $0 \leq \kappa < d/d_0 $ as follows:
\begin{align*}
\sum_\kappa 2^{\omega\left(\gcd\left(d/d_0,\kappa\right)\right)} \prod_{p|\frac{d}{d_0}}\left(1-\left(\frac{-\kappa}{p}\right)p^{-1}\right) &= \sum_\kappa \prod_{p|\frac{d}{d_0}}\left(1-\left(\frac{-\kappa}{p}\right)p^{-1}\right)\left(2-\left(\frac{-\kappa}{p}\right)^2\right)\\
&=\sum_\kappa \prod_{p|\frac{d}{d_0}}\left(2-\left(\frac{-\kappa}{p}\right)^2-\left(\frac{-\kappa}{p}\right)p^{-1} \right)\\
&=\sum_\kappa \prod_{p|\frac{d}{d_0}}\left(2-\left(\frac{-\kappa}{p}\right)^2 \right)\\
&=\sum_\kappa \prod_{p|\frac{d}{d_0}}\left(2-\left(\frac{-\kappa}{p}\right)^2 \right)\\
&=\sum_\kappa 2^{\omega\left(\gcd\left(d/d_0,\kappa\right)\right)}\\
&=\sum_{e|\frac{d}{d_0}}\sum_{\substack{\kappa\\ \gcd(d/d_0,\kappa)=e}} \tau(e)\\
&=\sum_{e|\frac{d}{d_0}}\phi\left(\frac{d/d_0}{e}\right) \tau(e)\\
&=\prod_{p|\frac{d}{d_0}} \left(\phi(p)+\tau(p)\right) \\
&= \prod_{p|\frac{d}{d_0}}(p+1) = \frac{d}{d_0}\prod_{p|\frac{d}{d_0}}(1+p^{-1}).
\end{align*}
Therefore for each fixed $m$ and $r$, we have
\begin{align*}
\#\{c:vol \left(\mathcal{S}_{m,c,r}\right) < X\} =\frac{3C}{\pi d}  \prod_{p|\frac{d}{d_0}}\frac{1}{1-p^{-1}} X + o(X)
\end{align*}
Now we sum the term depending on $m$ over $0\leq m < d$ as follows:
\begin{align*}
\sum_{m}\prod_{p|\gcd(m,d)}\frac{1}{1-p^{-1}} &= \sum_{e|d} \sum_{\substack{m\\ \gcd(m,d)= e}}\prod_{p|e}\frac{1}{1-p^{-1}}\\
&=\sum_{e|d} \phi(d/e)\prod_{p|e}\frac{1}{1-p^{-1}} \\
&=\prod_{p|d} (\phi(p) + \frac{1}{1-p^{-1}})\\
&=d\prod_{p|d} (1-p^{-1}+(p-1)^{-1}),
\end{align*}
Therefore
\[
\#\{m,c:vol \left(\mathcal{S}_{m,c,r}\right) < X\} = \frac{3C}{\pi }\left(\prod_{p|d} (1+p^{-2}+p^{-3}+p^{-4}+ \ldots)\right) X + o(X),
\]
and the theorem follows by noting that the number of choices for $r$ is $\tau(d)/2$.
\end{proof}

To complete the proof of Theorem \ref{thm}, we compute the leading constant as follows:
\begin{align*}
\frac{3C\tau (d)}{2\pi} \prod_{p|d} (1+p^{-2}+p^{-3}+ \ldots)&=\frac{3C\tau (d)}{2\pi} \prod_{p|d} \frac{1+\frac{1}{p^3}}{1-\frac{1}{p^2}}\\
&=\frac{3\tau (d)}{2\pi} \prod_{p|d} \frac{1+\frac{1}{p^3}}{1-\frac{1}{p^2}} \prod_{p\nmid d} (1-p^{-1}+(p+\chi_{-d}(p))^{-1})\\
&=\frac{3\tau (d)}{2\pi} \prod_{p|d} \frac{1+\frac{1}{p^3}}{1-\frac{1}{p^2}} \prod_{p\nmid d} \frac{1+\frac{\chi_{-d}(p)}{p}-\frac{\chi_{-d}(p)}{p^2}}{1+\frac{\chi_{-d}(p)}{p}}\\
&=\frac{3\tau (d)}{2\pi} \prod_{p|d} \frac{1+\frac{1}{p^3}}{1-\frac{1}{p^2}} \prod_{p\nmid d} \frac{1-\frac{\chi_{-d}(p)^2+\chi_{-d}(p)}{p^2}+\frac{1}{p^3}}{1-\frac{1}{p^2}}\\
&=\frac{\tau (d)\pi}{4} \prod_{p} \left(1-\frac{\chi_{-d}(p)^2+\chi_{-d}(p)}{p^2}+\frac{1}{p^3}\right).
\end{align*}

\bibliographystyle{alpha}
\bibliography{bibfile}

\begin{thebibliography}{Hub61}

\bibitem[Hej83]{MR711197}
Dennis~A. Hejhal.
\newblock {\em The {S}elberg trace formula for {${\rm PSL}(2,\,{\bf R})$}.
  {V}ol. 2}, volume 1001 of {\em Lecture Notes in Mathematics}.
\newblock Springer-Verlag, Berlin, 1983.

\bibitem[Hub61]{MR0126549}
Heinz Huber.
\newblock Zur analytischen {T}heorie hyperbolischer {R}aumformen und
  {B}ewegungsgruppen. {II}.
\newblock {\em Math. Ann.}, 142:385--398, 1960/1961.

\bibitem[Mar69]{MR0257933}
G.~A. Margulis.
\newblock Certain applications of ergodic theory to the investigation of
  manifolds of negative curvature.
\newblock {\em Funkcional. Anal. i Prilo\v zen.}, 3(4):89--90, 1969.

\bibitem[MR91]{MR1108918}
C.~Maclachlan and A.~W. Reid.
\newblock Parametrizing {F}uchsian subgroups of the {B}ianchi groups.
\newblock {\em Canad. J. Math.}, 43(1):158--181, 1991.

\bibitem[MR03]{MR1937957}
Colin Maclachlan and Alan~W. Reid.
\newblock {\em The arithmetic of hyperbolic 3-manifolds}, volume 219 of {\em
  Graduate Texts in Mathematics}.
\newblock Springer-Verlag, New York, 2003.

\bibitem[MV07]{MR2378655}
Hugh~L. Montgomery and Robert~C. Vaughan.
\newblock {\em Multiplicative number theory. {I}. {C}lassical theory},
  volume~97 of {\em Cambridge Studies in Advanced Mathematics}.
\newblock Cambridge University Press, Cambridge, 2007.

\bibitem[Sar80]{MR2630950}
Peter~Clive Sarnak.
\newblock {\em P{RIME} {GEODESIC} {THEOREMS}}.
\newblock ProQuest LLC, Ann Arbor, MI, 1980.
\newblock Thesis (Ph.D.)--Stanford University.

\bibitem[Sar05]{sarletter}
P.~Sarnak.
\newblock Letter to j. davis about reciprocal geodesics.
\newblock {\em publications.ias.edu/sarnak}, pages 1--23, 2005.

\bibitem[Sel56]{MR0088511}
A.~Selberg.
\newblock Harmonic analysis and discontinuous groups in weakly symmetric
  {R}iemannian spaces with applications to {D}irichlet series.
\newblock {\em J. Indian Math. Soc. (N.S.)}, 20:47--87, 1956.

\bibitem[Voi]{Voight2017}
John Voight.
\newblock {\em Quaternion algebras}.
\newblock to appear Springer-Verlag.

\bibitem[Vul93]{Vulakh1993}
L.~Ya. Vulakh.
\newblock Maximal {F}uchsian subgroups of extended {B}ianchi groups.
\newblock In {\em Number theory with an emphasis on the {M}arkoff spectrum
  ({P}rovo, {UT}, 1991)}, volume 147 of {\em Lecture Notes in Pure and Appl.
  Math.}, pages 297--310. Dekker, New York, 1993.

\end{thebibliography}
\end{document}